\providecommand{\tabularnewline}{\\}
\newcommand{\lyxdot}{.}
\numberwithin{equation}{section}
\numberwithin{figure}{section}
  \theoremstyle{definition}
  \newtheorem{defn}{\protect\definitionname}[section]
  \theoremstyle{plain}
  \newtheorem{thm}{\protect\theoremname}[section]
  \theoremstyle{plain}
  \newtheorem{lem}{\protect\lemmaname}[section]
  \theoremstyle{plain}
  \newtheorem{assumption}{\protect\assumptionname}
  \theoremstyle{plain}
  \newtheorem{prop}{\protect\propositionname}[section]
  \theoremstyle{plain}
  \newtheorem{conjecture}{\protect\conjecturename}[section]
  \theoremstyle{remark}
  \newtheorem{rem}{\protect\remarkname}[section]
\renewcommand{\rho}{\varrho}
\renewcommand{\phi}{\varphi}
\theoremstyle{remark}
  \providecommand{\assumptionname}{Assumption}
  \providecommand{\conjecturename}{Conjecture}
  \providecommand{\definitionname}{Definition}
  \providecommand{\lemmaname}{Lemma}
  \providecommand{\propositionname}{Proposition}
  \providecommand{\remarkname}{Remark}
\providecommand{\theoremname}{Theorem}
\begin{document}

\title{Cyclotomic Aperiodic Substitution Tilings with Dense Tile Orientations}

\author{Stefan Pautze}

\address{Stefan Pautze (Visualien der Breitbandkatze), Am Mitterweg 1, 85309
Pörnbach, Germany}

\urladdr{\texttt{http://www.pautze.de}}

\email{\texttt{stefan@pautze.de}}
\begin{abstract}
The class of Cyclotomic Aperiodic Substitution Tilings (CAST) whose
vertices are supported by the $2n$-th cyclotomic field $\mathbb{Q}\left(\zeta_{2n}\right)$
is extended to cases with Dense Tile Orientations (DTO). It is shown
that every CAST with DTO has an inflation multiplier {\normalsize{}$\eta$}
with irrational argument so that $\nicefrac{k\pi}{2n}\neq\arg\left(\eta\right)\notin\mathbb{\pi Q}$.
The minimal inflation multiplier {\normalsize{}$\eta_{min.irr.}$}
is discussed for $n\geqq2$. Examples of CASTs with DTO, minimal inflation
multiplier{\normalsize{} $\eta_{min.irr.}$} and individual dihedral
symmetry $D_{2n}$ are introduced for $n\in\left\{ 2,3,4,5,6,7\right\} $.
The examples for $n\in\left\{ 2,3,4,5,6\right\} $ also yield finite
local complexity (FLC).
\end{abstract}

\maketitle

\section{Introduction}

Aperiodic substitution tilings and their properties are investigated
and discussed in view of their application in physics and chemistry.
However, they are also of mathematical interest on their own. Without
any doubt, they have great aesthetic qualities as well. See \citep{Grunbaum:1986:TP:19304,oro38933,baake2017aperiodic,HFonl}
and references therein. An interesting variant are substitution tilings
with dense tile orientations (DTO), i.e. the orientations of the tiles
are dense on the circle. Well known examples are the Pinwheel tiling
as described by Radin and Conway in \citep{10.2307/2118575,radinmiles}
and its relatives in \citep{sadun98some,2008PMag...88.2033F,FRETTLOH20081881}.
Recently Frettlöh, Say-awen and de las Peñas described “Substitution
Tilings with Dense Tile Orientations and n-fold Rotational Symmetry”
\citep{FRETTLOH2017120} and continued the discussion in \citep{Say-awen:eo5083}.
For this reason we extend the results in \citep[Chapter 2]{sym9020019}
regarding Cyclotomic Aperiodic Substitution Tilings (CAST) to cases
with DTO in Section~\ref{sec:A_Revised_Definition_Of_CASTs_with_and_without_DTO}.
In Section~\ref{sec:Irrational_Angles} it is shown that every CAST
with DTO has an inflation multiplier $\eta$ with irrational argument
so that $\nicefrac{k\pi}{2n}\neq\arg\left(\eta\right)\notin\mathbb{\pi Q}$.
Furthermore minimal inflation multipliers $\eta_{min.irr.}$ of CASTs
with DTO are derived in Section~\ref{sec:Minimal_Inflation_Multiplier}.
In Section~\ref{sec:Examples_of_CASTs_with_DTO} examples with minimal
inflation multiplier and individual dihedral symmetry $D_{2n}$ are
introduced for the cases $n\in\left\{ 2,3,4,5,6,7\right\} $ and compared
to the results in \citep{FRETTLOH2017120} and \citep{Say-awen:eo5083}.

For terms and definitions we stay close to \citep{oro38933}, \citep{HFonl},
\citep{FRETTLOH2017120}, \citep{Say-awen:eo5083}, \citep{sym9020019}
and references therein:
\begin{itemize}
\item A ``tile'' in $\mathbb{\mathbb{\mathbb{R}}}^{d}$ is defined as
a nonempty compact subset of $\mathbb{\mathbb{\mathbb{R}}}^{d}$ which
is the closure of its interior.
\item A ``tiling'' $\mathcal{T}$ in $\mathbb{\mathbb{\mathbb{R}}}^{d}$
is a countable set of tiles, which is a covering as well as a packing
of $\mathbb{\mathbb{\mathbb{R}}}^{d}$. The union of all tiles is
$\mathbb{\mathbb{\mathbb{R}}}^{d}$. The intersection of the interior
of two different tiles is empty.
\item A ``patch'' $\mathcal{P\mathcal{\subset}T}$ is a finite subset
of a tiling.
\item A tiling $\mathcal{T}$ is called ``aperiodic'' if no translation
maps the tiling to itself.
\item ``Prototiles'' $P_{x}$ serve as building blocks for a tiling.
\item Within this article the term ``substitution'' means, that a tile
is expanded and rotated with a linear map - the complex ``inflation
multiplier'' $\eta\subset\mathbb{C}$ - and dissected into copies
of prototiles in original size - the ``substitution rule''.
\item A ``supertile'' is the result of one or more substitutions, applied
to a single tile.
\item The result of $k$ substitutions applied to a single tile is noted
as ``level-k supertile''. 
\item A substitution is primitive if there is a $k\in\mathbb{N}$ for every
prototile $P_{x}$ such that the corresponding level-$k$ supertile
contains equivalent copies of all prototiles.
\item A substitution tiling $\mathcal{T}$ with $l\in\mathbb{\mathbb{N}}_{>0}$
prototiles and their substitution rules is the result of an infinite
number of substitutions applied to a single prototile.
\item A primitive substitution tiling $\mathcal{T}$ (or more accurately,
the set of substitution rules of its prototiles) is partially characterized
by its primitive substitution matrix $M\in\mathbb{\mathbb{N}}_{0}^{l\times l}$
with a Perron-Frobenius eigenvalue $\lambda_{1}=\eta\overline{\eta}$.
\item A tiling has ``dense tile orientations'' (DTO) if the orientations
of the tiles are dense on the circle.
\item A tiling has DTO if some level-$k$ supertile contains two congruent
tiles which are rotated against each other by an irrational angle
$\varTheta\notin\mathbb{\pi Q}$. 
\item A tiling with ``individual cyclical or dihedral symmetry'' $C_{n}$
or $D_{n}$ contains an infinite number of patches of any size with
cyclical or dihedral symmetry $C_{n}$ or $D_{n}$.
\item A tiling $\mathcal{T}$ with DTO has ``finite local complexity''
(FLC) with respect to rigid motions if for any $r>0$ there are only
finitely many pairwise non-congruent patches in $\mathcal{T}$ fitting
into a ball of radius $r$.
\item We use $\zeta_{n}^{k}\in\mathbb{C}$ to denote the $n$-th roots of
unity so that $\zeta_{n}^{k}=e^{\frac{2\mathtt{i}k\pi}{n}}$ and its
complex conjugate $\overline{\zeta_{n}^{k}}=e^{-\frac{2\mathtt{i}k\pi}{n}}$.
\item $\mathbb{Q}\left(\zeta_{n}\right)$ denotes the $n$-th cyclotomic
field. Since $\mathbb{Q}\left(\zeta_{n}\right)=\mathbb{Q}\left(\zeta_{2n}\right)$
for $odd\;n$ we discuss tilings in the $2n$-th cyclotomic fields
$\mathbb{Q}\left(\zeta_{2n}\right)$ only.
\item The maximal real subfield of $\mathbb{Q}\left(\zeta_{n}\right)$ is
$\mathbb{Q}\left(\zeta_{n}+\overline{\zeta_{n}}\right)$.
\item $\mathbb{Z}\left[\zeta_{n}\right]$ denotes the ring of algebraic
integers in $\mathbb{Q}\left(\zeta_{n}\right)$.
\item $\mathbb{Z}\left[\zeta_{n}+\overline{\zeta_{n}}\right]$ denotes the
the ring of algebraic integers (which are real numbers) in $\mathbb{Q}\left(\zeta_{n}+\overline{\zeta_{n}}\right)$. 
\item We use $\mu_{n,k}$ to denote the $k$-th diagonal of a regular $n$-gon
with side length $1$, in detail $\mu_{n,k}=\sum_{i=0}^{k-1}\zeta_{2n}^{2i-k+1}=\nicefrac{sin(\nicefrac{k\pi}{n})}{sin(\nicefrac{\pi}{n})}$
with $\left\lfloor n/2\right\rfloor \geq k\geq1$ and $\mu_{n,n-k}=\mu_{n,k}$
with $n>k\geq1$.
\item $\mathbb{Z}\left[\mu_{n}\right]=\mathbb{Z}\left[\mu_{n,1},\mu_{n,2},\mu_{n,3}...\mu_{n,\left\lfloor n/2\right\rfloor }\right]$
denotes the ring of the diagonals of a regular $n$-gon.
\item The minimal inflation multiplier $\eta_{min.irr.}$ is the inflation
multiplier $\eta\in\mathbb{Z}\left[\zeta_{2n}\right]\setminus\left\{ 0\right\} $
of a CAST with irrational argument $\arg\left(\eta\right)\notin\pi\mathbb{Q}$
and with minimal absolute value.\pagebreak{}
\end{itemize}

\section{\label{sec:A_Revised_Definition_Of_CASTs_with_and_without_DTO}A
Revised Definition Of CASTs}

We recall the definition of Cyclotomic Aperiodic Substitution Tilings
as noted in \citep{sym9020019}:
\begin{defn}
\label{def:CAST}A (substitution) tiling $\mathcal{T}$ in the complex
plane is cyclotomic if the coordinates of all vertices are algebraic
integers in $\mathbb{Z}\left[\zeta_{2n}\right]$, i.e. an integer
sum of the $2n$-th roots of unity. As a result all vertices of all
substituted prototiles and the inflation multiplier are algebraic
integers as well. 

Recently substitution tilings with dense tile orientations (DTO) and
$n$-fold rotational symmetry have been described in \citep{FRETTLOH2017120}
by Frettlöh, Say-awen and de las Peñas. The results in the paper imply
that a substitution tiling $\mathcal{T}$ with inflation multiplier
$\eta\in\mathbb{Z}\left[\zeta_{2n}\right]\mid\left|\eta\right|>1$
and $\arg\left(\eta\right)\notin\mathbb{\pi Q}$ has vertices supported
by the cyclotomic field $\mathbb{Q}\left(\zeta_{2n}\right)$ but not
necessarily by the ring of algebraic integers $\mathbb{Z}\left[\zeta_{2n}\right]$.
An example is shown in Figure~\ref{fig:Pinwheel}. Obviously Definition~\ref{def:CAST}
is too constrictive.
\end{defn}
\begin{figure}
\begin{center}
\resizebox{1.0\textwidth}{!}{%

\includegraphics{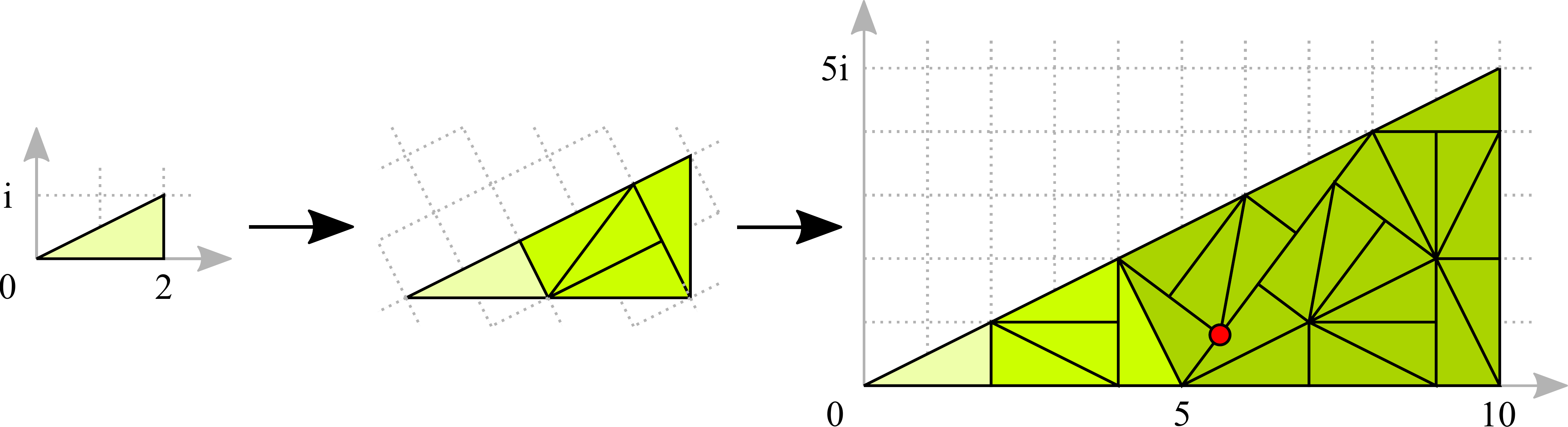}}
\end{center}\caption{\label{fig:Pinwheel}The Pinwheel tiling as described by Radin and
Conway in \citep{10.2307/2118575,radinmiles}, in detail two substitutions
applied to a prototile placed in the complex plane. Despite the initial
position of the prototile not all coordinates of the vertices within
the level-$2$ supertile are elements of $\mathbb{Z}\left[\zeta_{4}\right]$,
e.g. the red marked vertex has coordinates $\frac{28}{5}+i\frac{4}{5}$.}
\end{figure}

\begin{thm}
\label{thm:CAST2}A substitution tiling $\mathcal{T}$ with vertices
supported by the $2n$-th cyclotomic field $\mathbb{Q}\left(\zeta_{2n}\right)$,
\textup{$l\in\mathbb{\mathbb{N}}_{>0}$} prototiles and a primitive
positive substitution matrix $M\in\mathbb{\mathbb{N}}_{0}^{l\times l}$
with a Perron–Frobenius eigenvalue $\lambda_{1}=|\eta^{2}|=\eta\cdot\overline{\eta}$
has an inflation multiplier $\eta\mid\left|\eta\right|>1$ which can
be written as $\frac{z}{\overline{z}}\eta\in\mathbb{Z}\left[\zeta_{2n}\right]$
for some $z\in\mathbb{Q}\left(\zeta_{2n}\right)\setminus\left\{ 0\right\} $
.\end{thm}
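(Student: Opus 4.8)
The plan is to reduce the statement to a relative-norm equation in $\mathbb Z[\zeta_{2n}]$ and then to solve that equation one prime ideal at a time. First I would check that the inflation multiplier itself lies in the field: every edge vector of a prototile is a difference of two vertices and hence lies in $\mathbb Q(\zeta_{2n})$, and a single substitution carries a nonzero such edge vector $v$ to $\eta v$, which is a sum of edge vectors of the level-$1$ supertile and so again lies in $\mathbb Q(\zeta_{2n})$. Thus $\eta\in\mathbb Q(\zeta_{2n})$. Since $M$ is an integral matrix, its Perron--Frobenius eigenvalue $\lambda_1=\eta\overline\eta$ is a real algebraic integer; being a real element of $\mathbb Q(\zeta_{2n})$ it lies in $\mathbb Z[\zeta_{2n}+\overline{\zeta_{2n}}]$, and because the cyclotomic Galois group is abelian every conjugate of $\lambda_1$ has the form $|\tau(\eta)|^2>0$, so $\lambda_1$ is totally positive.

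Write $\sigma$ for complex conjugation, the generator of the Galois group of the quadratic extension $\mathbb Q(\zeta_{2n})/\mathbb Q(\zeta_{2n}+\overline{\zeta_{2n}})$, so that $\sigma(x)=\overline x$. The key observation is that any element of the prescribed shape $\xi=\tfrac{z}{\overline z}\,\eta$ automatically satisfies $\xi\overline\xi=\eta\overline\eta=\lambda_1$, since $|z/\overline z|=1$; conversely, if some $\xi\in\mathbb Z[\zeta_{2n}]$ satisfies $\xi\overline\xi=\lambda_1$, then $w:=\xi/\eta$ has relative norm $w\overline w=1$, whence Hilbert's Theorem~90 yields $w=z/\sigma(z)=z/\overline z$ for some $z\in\mathbb Q(\zeta_{2n})\setminus\{0\}$ with $\tfrac{z}{\overline z}\eta=\xi$. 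Hence the theorem is equivalent to the assertion that the totally positive integer $\lambda_1$ can be written $\xi\overline\xi$ with $\xi\in\mathbb Z[\zeta_{2n}]$, i.e.\ that there is a norm-one $w$ with $w\eta\in\mathbb Z[\zeta_{2n}]$.

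Next I would solve this at the level of divisors. Factor $(\eta)=\prod_{\mathfrak p}\mathfrak p^{\,a_{\mathfrak p}}$ into prime ideals; integrality of $(\lambda_1)=(\eta)\,\sigma\!\big((\eta)\big)$ gives $a_{\mathfrak p}+a_{\sigma(\mathfrak p)}\ge 0$ for every $\mathfrak p$. A norm-one $w$ has anti-invariant divisor, $\operatorname{div}(w)=-\sigma\big(\operatorname{div}(w)\big)$, so I look for an anti-invariant divisor $D$ with $a_{\mathfrak p}+D_{\mathfrak p}\ge 0$ for all $\mathfrak p$. At a self-conjugate prime anti-invariance forces $D_{\mathfrak p}=0$, and there $a_{\mathfrak p}=\tfrac12\big(a_{\mathfrak p}+a_{\sigma(\mathfrak p)}\big)\ge 0$ already; on a conjugate pair $\{\mathfrak p,\sigma(\mathfrak p)\}$ the requirement is an integer $t=D_{\mathfrak p}=-D_{\sigma(\mathfrak p)}$ with $-a_{\mathfrak p}\le t\le a_{\sigma(\mathfrak p)}$, which is solvable exactly because $a_{\mathfrak p}+a_{\sigma(\mathfrak p)}\ge 0$. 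Setting $E=\sum_{\text{pairs}}t\,\mathfrak p$ and choosing $z$ with $\operatorname{div}(z)=E$, the element $w=z/\overline z$ has $\operatorname{div}(w)=E-\sigma(E)=D$, so $\operatorname{div}\!\big(\tfrac{z}{\overline z}\eta\big)=D+(\eta)\ge 0$ and $\tfrac{z}{\overline z}\eta\in\mathbb Z[\zeta_{2n}]$.

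The one step that needs justification, and which I expect to be the main obstacle, is realizing the divisor $E$ as $\operatorname{div}(z)$ of an actual field element --- equivalently, arranging the cancelling divisor $D$ to lie in $\{\operatorname{div}(z/\overline z):z\in\mathbb Q(\zeta_{2n})^\times\}=(1-\sigma)(\text{principal divisors})$. When $\mathbb Z[\zeta_{2n}]$ is a principal ideal domain every divisor is principal, the construction goes through verbatim, and there is no residual unit ambiguity because $w\eta$ has relative norm exactly $\lambda_1$; this already covers every field occurring in the paper, where $2n\in\{4,6,8,10,12,14\}$. In general the local integral conditions are exactly the inequalities $a_{\mathfrak p}+a_{\sigma(\mathfrak p)}\ge 0$ secured above, so the sole remaining obstruction to a global integral solution is class-theoretic: one must meet an admissible $D$ inside $(1-\sigma)$ of the principal divisors, a condition on the minus part of the ideal class group. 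I would try to kill it using the freedom in the ranges $[-a_{\mathfrak p},a_{\sigma(\mathfrak p)}]$, and this class-group term is the genuine heart of a fully general proof.
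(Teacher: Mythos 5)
Your proposal takes a genuinely different route from the paper, and a far more substantive one. The paper's proof has exactly two steps: it shows (as you do) that $\lambda_{1}=\eta\overline{\eta}$ is an algebraic integer lying in the maximal real subfield, hence $\lambda_{1}\in\mathbb{Z}\left[\zeta_{2n}+\overline{\zeta_{2n}}\right]$, and it then displays the two implications ``$\exists z$ with $\frac{z}{\overline{z}}\eta\in\mathbb{Z}\left[\zeta_{2n}\right]$ $\Rightarrow$ $\lambda_{1}$ integral'' and ``$\nexists z$ $\Rightarrow$ $\lambda_{1}$ not integral,'' and combines them into the biconditional. The first is the easy direction; the second --- whose contrapositive, that integrality of $\lambda_{1}$ forces the existence of $z$, is the entire content of the theorem --- is asserted in the paper with no argument whatsoever. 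Your Hilbert~90 reduction supplies precisely this missing mathematics: the existence of $z$ is equivalent to solvability of the relative norm equation $\xi\overline{\xi}=\lambda_{1}$ with $\xi\in\mathbb{Z}\left[\zeta_{2n}\right]$, and your divisor analysis (integrality of $\left(\lambda_{1}\right)$ gives $a_{\mathfrak{p}}+a_{\sigma\left(\mathfrak{p}\right)}\geq0$, which forces $a_{\mathfrak{p}}\geq0$ at self-conjugate primes and leaves the nonempty range $-a_{\mathfrak{p}}\leq t\leq a_{\sigma\left(\mathfrak{p}\right)}$ on conjugate pairs) is correct and complete whenever the auxiliary ideal $E$ can be chosen principal, in particular whenever $\mathbb{Q}\left(\zeta_{2n}\right)$ has class number one. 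That covers every field in which the paper actually exhibits tilings, $2n\in\left\{ 4,6,8,10,12,14\right\} $.

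The gap you flag at the end is real: for general $n$ one must land an admissible anti-invariant divisor $D$ inside $\left(1-\sigma\right)$ applied to the principal divisors, and the interval freedom in $t$ cannot by itself overcome a nontrivial minus part of the class group. Indeed, the freedom can vanish entirely: if $a_{\mathfrak{p}}+a_{\sigma\left(\mathfrak{p}\right)}=0$ for every pair, then $\lambda_{1}$ is a (totally positive) unit, every $t$ is forced, and solvability becomes the question of whether $\lambda_{1}$ is the relative norm of a \emph{unit} of $\mathbb{Z}\left[\zeta_{2n}\right]$ --- a nontrivial question about the CM extension $\mathbb{Q}\left(\zeta_{2n}\right)/\mathbb{Q}\left(\zeta_{2n}+\overline{\zeta_{2n}}\right)$ that the interval argument cannot touch. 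So your argument does not prove the theorem as stated for all $n$; any completion would have to use either class-field-theoretic input or the so-far-unused hypothesis that $\eta$ actually arises from a primitive substitution. You should know, however, that the paper does not close this gap either: it is exactly what is hidden in the unproven second implication. Your proposal is therefore strictly stronger than the published argument, and it has the merit of localizing precisely where the difficulty for general $n$ lies.
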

\begin{proof}
It is known that the ring of integers $\mathcal{O}_{K}$ of an algebraic
number field $K$ is the ring of all integral elements contained in
$K$. Furthermore an integral element is a root of a monic polynomial
with rational integer coefficients, such as the characteristic polynom
of $M\in\mathbb{\mathbb{N}}_{0}^{l\times l}$. In other words, the
Perron–Frobenius eigenvalue $\lambda_{1}$ of $M$ is an integral
element of $K$ and so $\lambda_{1}\in\mathcal{O}_{K}\subset K$.
Since the tiling $\mathcal{T}$ has vertices supported by the $2n$-th
cyclotomic field $\mathbb{Q}\left(\zeta_{2n}\right)$ we can deduce
that $\eta\in\mathbb{Q}\left(\zeta_{2n}\right)$ and $\lambda_{1}=\eta\cdot\overline{\eta}\in\mathbb{Q}\left(\zeta_{2n}+\overline{\zeta_{2n}}\right)$.
We can note that $K\simeq\mathbb{Q}\left(\zeta_{2n}\right)$ and $\mathcal{O}_{K}\simeq\mathbb{Z}\left[\zeta_{2n}\right]$,
hence $\lambda_{1}=\eta\cdot\overline{\eta}\in\mathbb{Z}\left[\zeta_{2n}+\overline{\zeta_{2n}}\right]$. 

To complete the proof we have to investigate, for which inflation
multipliers $\eta$ the Perron–Frobenius eigenvalue fulfills the conditions
$\lambda_{1}=\eta\cdot\overline{\eta}\in\mathbb{Z}\left[\zeta_{2n}+\overline{\zeta_{2n}}\right]$:

\begin{equation}
\exists z\mid\frac{z}{\overline{z}}\eta\in\mathbb{Z}\left[\zeta_{2n}\right]\rightarrow\lambda_{1}=\eta\cdot\overline{\eta}\in\mathbb{Z}\left[\zeta_{2n}+\overline{\zeta_{2n}}\right]
\end{equation}

\begin{equation}
\nexists z\mid\frac{z}{\overline{z}}\eta\in\mathbb{Z}\left[\zeta_{2n}\right]\rightarrow\lambda_{1}=\eta\cdot\overline{\eta}\notin\mathbb{Z}\left[\zeta_{2n}+\overline{\zeta_{2n}}\right]
\end{equation}

As a result we can conclude:
\begin{equation}
\exists z\mid\frac{z}{\overline{z}}\eta\in\mathbb{Z}\left[\zeta_{2n}\right]\leftrightarrow\lambda_{1}=\eta\cdot\overline{\eta}\in\mathbb{Z}\left[\zeta_{2n}+\overline{\zeta_{2n}}\right]
\end{equation}

\end{proof}
The results in \citep[Chapter 2]{sym9020019} apply also for tilings
in Theorem~\ref{thm:CAST2} and we can revise Definition~\ref{def:CAST}
as follows:
\begin{defn}
\label{def:CAST2}A substitution tiling $\mathcal{T}$ in the complex
plane is cyclotomic if the coordinates of all vertices are supported
by the $2n$-th cyclotomic field $\mathbb{\mathbb{Q}}\left(\zeta_{2n}\right)$.
\end{defn}
For simplification we will limit the following discussion to the case
that $\eta\in\mathbb{Z}\left[\zeta_{2n}\right]$.

\section{\label{sec:Irrational_Angles}Irrational Angles }

In the following chapter we well discuss under which conditions a
CAST may yield DTO.

We recall, all vertices of $\mathcal{T}$ are supported by the $2n$-th
cyclotomic field $\mathbb{\mathbb{Q}}\left(\zeta_{2n}\right)$. So
each finite patch $\mathcal{P}$ which contains at least one copy
of each level-$1$ supertile of each of the $l$ prototiles $P_{x}$
of $\mathcal{T}$ can be mapped to $\mathcal{P}'$ with an integer
multiplier so that the vertices of $\mathcal{P}'$ are supported by
$\mathbb{Z}\left[\zeta_{2n}\right]$. As a consequence each CAST can
be described by a set of prototiles and substitution rules, or more
accurately, level-$1$ supertiles with vertices supported by $\mathbb{Z}\left[\zeta_{2n}\right]$.
A CAST with DTO contains congruent tiles and so also corresponding
edges which are rotated against each other by an irrational angle
$\varTheta\notin\mathbb{\pi Q}$. As consequence at least one prototile
exists whose edges enclose irrational angles. This and the primitivity
of the tiling imply that the inflation multiplier of a CAST with DTO
must have an irrational argument so that $\arg\left(\eta\right)\notin\mathbb{\pi Q}$,
but not necessarily vice versa. An example for the latter case is
shown in Figure~\ref{fig:CAST-without -DTO-and-irr-arg}. 

\begin{figure}[t]
\begin{center}
\resizebox{0.9\textwidth}{!}{%

\includegraphics{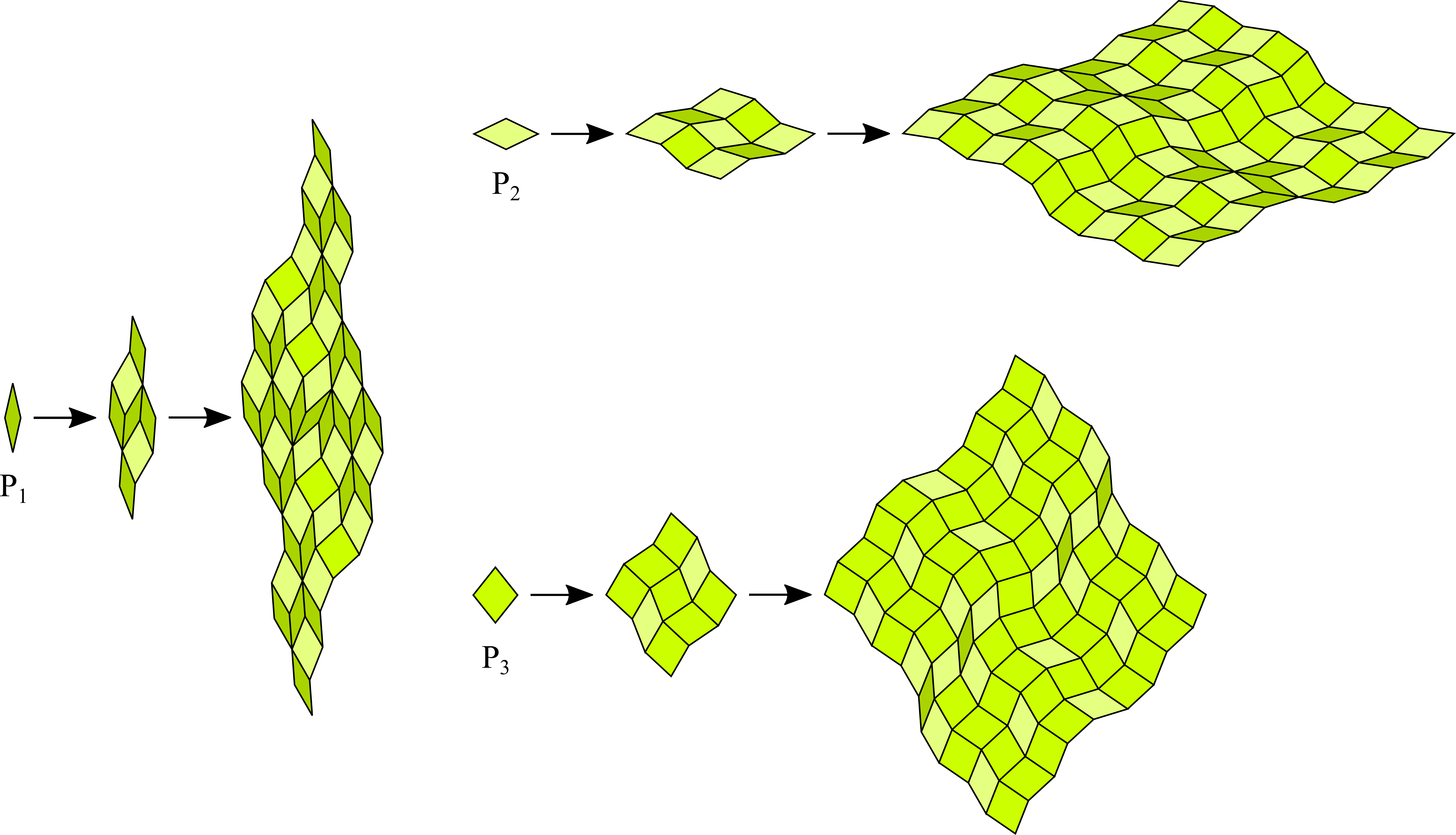}}
\end{center}

\caption{\label{fig:CAST-without -DTO-and-irr-arg}A rhombic CAST with 7, hence
finite many tile orientations. Its inflation multiplier $\eta=\zeta_{14}^{0}+\zeta_{14}^{0}+\zeta_{14}^{1}$
has an irrational argument $\arg\left(\eta\right)\protect\notin\mathbb{\pi Q}$
but its tiles are rotated against each other by rational angles $\nicefrac{k\pi}{7}\in\mathbb{\pi Q}$.}
\end{figure}

While in \citep[Theorem 5]{FRETTLOH2017120} and \citep[Theorem 5]{Say-awen:eo5083}
a parallelogram criterion was used to identify irrational angles,
we need to extend their results to a more general approach. In detail
we need a criterion to decide which elements of $\mathbb{Z}\left[\zeta_{2n}\right]$
have rational or irrational arguments. The author believes (as the
authors of the cited articles) that the following results must be
known already, but he is also not aware of any reference.
\begin{lem}
\label{lem:z_arg(z)_rational_arg(z)_mul_kpi/n}Any $z\in\mathbb{Q}\left(\zeta_{2n}\right)\mid\left|z\right|=1$
on the unit circle which is not a root of unity in $\mathbb{Q}\left(\zeta_{2n}\right)$
has an irrational argument, i.e. $z\in\mathbb{Q}\left(\zeta_{2n}\right)\setminus\left\{ \zeta_{2n}^{k}\right\} \leftrightarrow\arg\left(z\right)\notin\pi\mathbb{Q}$
for \textup{$\left|z\right|=1$}.\end{lem}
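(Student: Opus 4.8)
The plan is to prove the stated equivalence through its contrapositive on each side, so that the whole content collapses to one clean assertion: for $z\in\mathbb{Q}\left(\zeta_{2n}\right)$ with $\left|z\right|=1$, one has $\arg\left(z\right)\in\pi\mathbb{Q}$ \emph{if and only if} $z$ is a $2n$-th root of unity $\zeta_{2n}^{k}$. Note that the exceptional set $\left\{ \zeta_{2n}^{k}\right\}$ appearing in the lemma is precisely the set of roots of unity lying in $\mathbb{Q}\left(\zeta_{2n}\right)$, since $2n$ is even; I would record this identification at the outset.

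First I would dispatch the trivial implication. If $z=\zeta_{2n}^{k}$ then $\arg\left(z\right)=\nicefrac{k\pi}{n}\in\pi\mathbb{Q}$ by direct evaluation of the definition of $\zeta_{2n}$; contrapositively, $\arg\left(z\right)\notin\pi\mathbb{Q}$ forces $z\notin\left\{ \zeta_{2n}^{k}\right\}$, which is one half of the biconditional. The substance lies in the converse. Assuming $\left|z\right|=1$ and $\arg\left(z\right)=\tfrac{p}{q}\pi$ with $\nicefrac{p}{q}\in\mathbb{Q}$, writing $z=e^{\mathtt{i}\pi p/q}$ shows at once that $z^{2q}=1$, so $z$ is a root of unity in $\mathbb{C}$. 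Letting $N$ be its multiplicative order makes $z$ a primitive $N$-th root of unity with $\mathbb{Q}\left(z\right)=\mathbb{Q}\left(\zeta_{N}\right)$, and the hypothesis $z\in\mathbb{Q}\left(\zeta_{2n}\right)$ then yields the field inclusion $\mathbb{Q}\left(\zeta_{N}\right)\subseteq\mathbb{Q}\left(\zeta_{2n}\right)$.

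It remains to convert this inclusion into the divisibility $N\mid 2n$, which forces $z=\zeta_{2n}^{k}$ and closes the contrapositive. I would invoke the standard compositum formula for cyclotomic fields, $\mathbb{Q}\left(\zeta_{a}\right)\mathbb{Q}\left(\zeta_{b}\right)=\mathbb{Q}\left(\zeta_{\operatorname{lcm}(a,b)}\right)$: since $\mathbb{Q}\left(\zeta_{N}\right)\subseteq\mathbb{Q}\left(\zeta_{2n}\right)$, the compositum collapses to $\mathbb{Q}\left(\zeta_{2n}\right)$, whence $\varphi\left(\operatorname{lcm}(N,2n)\right)=\varphi\left(2n\right)$ by comparing degrees $[\mathbb{Q}\left(\zeta_{m}\right):\mathbb{Q}]=\varphi\left(m\right)$.

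The hard part will be the purely number-theoretic step that drives everything else: the elementary fact that if $m\mid M$ and $\varphi\left(M\right)=\varphi\left(m\right)$, then $M=m$, save for the single exception $M=2m$ with $m$ odd. Applying this with $m=2n$ (which is \emph{even}, so the exception is excluded) and $M=\operatorname{lcm}(N,2n)$ gives $\operatorname{lcm}(N,2n)=2n$, i.e. $N\mid 2n$; hence $z$ is a $2n$-th root of unity and $z=\zeta_{2n}^{k}$, as required. Equivalently, one may simply quote the classification that the group of roots of unity contained in $\mathbb{Q}\left(\zeta_{m}\right)$ is cyclic of order $m$ for even $m$, which packages exactly this totient estimate and lets the proof proceed without reproving it.
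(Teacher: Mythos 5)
Your proposal is correct, and its skeleton is the same as the paper's: a unit-modulus complex number with rational argument is a root of unity, the roots of unity lying in $\mathbb{Q}\left(\zeta_{2n}\right)$ are exactly the $2n$-th roots of unity $\zeta_{2n}^{k}$, and therefore every other unit-modulus element of the field has irrational argument. The difference is in how the key classification step is handled, and it is a difference worth noting. The paper asserts that $\mathbb{Q}\left(\zeta_{2n}\right)$ has ``by definition exactly $2n$ roots of unity'' --- but this is not a definition, it is a theorem --- whereas you actually prove it: from $\arg\left(z\right)=\frac{p}{q}\pi$ you get $z^{2q}=1$, pass to the multiplicative order $N$ of $z$, obtain the inclusion $\mathbb{Q}\left(\zeta_{N}\right)\subseteq\mathbb{Q}\left(\zeta_{2n}\right)$, and then combine the compositum formula $\mathbb{Q}\left(\zeta_{N}\right)\mathbb{Q}\left(\zeta_{2n}\right)=\mathbb{Q}\left(\zeta_{\operatorname{lcm}(N,2n)}\right)$ with the totient fact that $m\mid M$ and $\varphi\left(M\right)=\varphi\left(m\right)$ force $M=m$ unless $M=2m$ with $m$ odd (excluded here since $2n$ is even) to conclude $N\mid2n$. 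Your version is therefore more self-contained and rigorous: it supplies a complete argument where the paper leans on an unproved (though standard and true) assertion, and it makes explicit why the evenness of $2n$ matters --- for odd $m$ the field $\mathbb{Q}\left(\zeta_{m}\right)$ also contains the $2m$-th roots of unity, so the naive count would fail. Your closing remark, that one may instead quote the classification of roots of unity in cyclotomic fields, is precisely the shortcut the paper takes.
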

\begin{proof}
It is known that any $z\in\mathbb{C}\mid\left|z\right|=1,\;arg\left(z\right)\in\pi\mathbb{Q}$
is a root of unity of some $2n$-th cyclotomic field $\mathbb{Q}\left(\zeta_{2n}\right)$
so that $z=e^{\frac{2\mathtt{i}k\pi}{2n}}=\zeta_{2n}^{k}$ . The $2n$-th
cyclotomic field $\mathbb{Q}\left(\zeta_{2n}\right)$ has by definition
exactly $2n$ roots of unity $\zeta_{2n}^{k}\mid0\leq k<2n$. As a
consequence every other $z\in\mathbb{Q}\left(\zeta_{2n}\right)\setminus\left\{ \zeta_{2n}^{k}\right\} \mid\left|z\right|=1$
on the unit circle must have an irrational argument.\end{proof}
\begin{lem}
\label{lem:z_with_rational_argument}For any $z\mathbb{\in Q}\left(\zeta_{2n}\right)$
with rational argument $\arg\left(z\right)\in\pi\mathbb{Q}$ the argument
of $z$ is an integer multiple of $\frac{\pi}{2n}$, so that $\arg\left(z\right)=\frac{k\pi}{2n}$.
In other words, the elements of $\mathbb{Q}\left(\zeta_{2n}\right)$
with rational argument can be found along $2n$ straight lines in
the complex plane running though $0$ and tilted by \textup{$\frac{k\pi}{2n}$}
relative to the real axis.\end{lem}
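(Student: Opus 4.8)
The plan is to reduce the statement to Lemma~\ref{lem:z_arg(z)_rational_arg(z)_mul_kpi/n}, which already classifies the unit-modulus elements of $\mathbb{Q}\left(\zeta_{2n}\right)$ with rational argument as precisely the roots of unity $\zeta_{2n}^{k}$. The obvious first attempt is to normalise $z$ to the unit circle by passing to $z/\left|z\right|$, but this fails: although $\left|z\right|^{2}=z\overline{z}$ lies in the real subfield $\mathbb{Q}\left(\zeta_{2n}+\overline{\zeta_{2n}}\right)$, its square root $\left|z\right|$ need not lie in $\mathbb{Q}\left(\zeta_{2n}\right)$ at all, so $z/\left|z\right|$ is in general not an element of the cyclotomic field and Lemma~\ref{lem:z_arg(z)_rational_arg(z)_mul_kpi/n} cannot be applied to it directly.

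To circumvent this I would instead consider the quotient $w=z/\overline{z}$. Since complex conjugation is an automorphism of $\mathbb{Q}\left(\zeta_{2n}\right)$ (it sends $\zeta_{2n}$ to $\zeta_{2n}^{-1}$), we have $\overline{z}\in\mathbb{Q}\left(\zeta_{2n}\right)\setminus\left\{ 0\right\}$ and hence $w\in\mathbb{Q}\left(\zeta_{2n}\right)$. By construction $\left|w\right|=\left|z\right|/\left|\overline{z}\right|=1$, so $w$ lies on the unit circle, and its argument satisfies $\arg\left(w\right)\equiv\arg\left(z\right)-\arg\left(\overline{z}\right)=2\arg\left(z\right)$ modulo $2\pi$. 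In particular the hypothesis $\arg\left(z\right)\in\pi\mathbb{Q}$ forces $\arg\left(w\right)\in\pi\mathbb{Q}$ as well.

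Now Lemma~\ref{lem:z_arg(z)_rational_arg(z)_mul_kpi/n} applies to $w$: a unit-modulus element of $\mathbb{Q}\left(\zeta_{2n}\right)$ with rational argument must be a root of unity, so $w=\zeta_{2n}^{k}$ for some integer $k$, whence $\arg\left(w\right)\equiv\frac{k\pi}{n}\pmod{2\pi}$. Combining this with $\arg\left(w\right)\equiv2\arg\left(z\right)$ yields $2\arg\left(z\right)\equiv\frac{k\pi}{n}\pmod{2\pi}$, that is $\arg\left(z\right)=\frac{k\pi}{2n}+m\pi$ for some integer $m$. Since $m\pi=\frac{2mn\,\pi}{2n}$, this is exactly the claim that $\arg\left(z\right)$ is an integer multiple of $\frac{\pi}{2n}$.

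The step that needs the most care---and the reason the statement reads $\frac{\pi}{2n}$ rather than the $\frac{\pi}{n}$ one might naively read off from the roots of unity of $\mathbb{Q}\left(\zeta_{2n}\right)$---is the factor of $2$ introduced by $z\mapsto z/\overline{z}$, which doubles the argument. Thus the $2n$ admissible arguments $\frac{k\pi}{n}$ of $w$ pull back to arguments $\frac{k\pi}{2n}$ of $z$ modulo $\pi$, i.e. to the $2n$ lines through the origin tilted by $\frac{k\pi}{2n}$ that the statement describes. I expect the only genuine obstacle to be bookkeeping the congruences modulo $2\pi$ and $\pi$ correctly when dividing the relation for $w$ by $2$; everything else is a direct application of the preceding lemma together with the fact that conjugation preserves $\mathbb{Q}\left(\zeta_{2n}\right)$.
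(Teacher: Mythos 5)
Your proof is correct and follows essentially the same route as the paper: both pass to the quotient $\frac{z}{\bar{z}}$, which lies in $\mathbb{Q}\left(\zeta_{2n}\right)$ with unit modulus, apply Lemma~\ref{lem:z_arg(z)_rational_arg(z)_mul_kpi/n} to conclude $\frac{z}{\bar{z}}=\zeta_{2n}^{k}$, and then halve the argument to obtain $\arg\left(z\right)=\frac{k\pi}{2n}$. Your version is in fact slightly more careful than the paper's, since it tracks the congruences modulo $2\pi$ and $\pi$ explicitly and justifies $\bar{z}\in\mathbb{Q}\left(\zeta_{2n}\right)$ via the conjugation automorphism.
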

\begin{proof}
It is known that $\arg\left(z\right)=\frac{1}{2}\arg\left(\frac{z}{\bar{z}}\right)$.
Because of Lemma~\ref{lem:z_arg(z)_rational_arg(z)_mul_kpi/n} we
know that if the argument of $z$ is rational or not, the same is
true for $\frac{z}{\bar{z}}$ and vice versa, i.e. $\arg\left(\frac{z}{\bar{z}}\right)\notin\pi\mathbb{Q}\leftrightarrow\arg\left(z\right)\notin\pi\mathbb{Q}$
and $\arg\left(\frac{z}{\bar{z}}\right)\in\pi\mathbb{Q}\leftrightarrow\arg\left(z\right)\in\pi\mathbb{Q}$.
The latter term is true if and only if $\frac{z}{\bar{z}}=\zeta_{2n}^{k}$.
Since $\arg\left(\zeta_{2n}^{k}\right)=\frac{k\pi}{n}$ we can conclude
that $\arg\left(z\right)\in\pi\mathbb{Q\leftrightarrow}\arg\left(z\right)=\frac{k\pi}{2n}$. 
\end{proof}
Obviously $\eta$ can be written as sum of $l$ roots of unity in
different ways:
\begin{equation}
\eta=\sum_{k=1}^{l}\zeta_{2n}^{\alpha_{k}}\in\mathbb{Z}\left[\zeta_{2n}\right]\;\;\;\;\;\;(\alpha_{k}\in\mathbb{Z},\;\alpha_{l}-\alpha_{1}<2n,\;\alpha_{k}\leq\alpha_{k+1})\label{eq:lambda-i-alpha}
\end{equation}
\begin{equation}
\eta=\sum_{k=1-n}^{n}a_{k}\zeta_{2n}^{k}\in\mathbb{Z}\left[\zeta_{2n}\right]\;\;\;\;\;\;(a_{k}\in\mathbb{\mathbb{N}}_{0},\;l=\sum_{k=1-n}^{n}a_{k},\;\max(a_{k})>0)\label{eq:lambda-i}
\end{equation}

\begin{assumption}
\label{assu:l_is_minimal}There are multiple ways to describe $\eta$.
We assume $\alpha_{k}$ and $a_{k}$ to be chosen so that the sum
is irreducible, i.e. $l$ is minimal.\end{assumption}
\begin{prop}
\label{prop:Criterion_Eta_is_rational_or_not}Every inflation multiplier
with rational argument $\eta\in\mathbb{Z}\left[\zeta_{2n}\right]\setminus\left\{ 0\right\} \mid\arg\left(\eta\right)\in\pi\mathbb{Q}$
can be noted as in Equation~\eqref{eq:lambda-i-alpha} with $\alpha_{k+1}-\alpha_{k}=\alpha_{l-k+1}-\alpha_{l-k}$
and vice versa.\end{prop}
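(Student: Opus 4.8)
The plan is to prove both implications after first restating the gap condition more symmetrically. Writing $d_k=\alpha_{k+1}-\alpha_k$ for the consecutive gaps, the condition $\alpha_{k+1}-\alpha_k=\alpha_{l-k+1}-\alpha_{l-k}$ says exactly that the gap sequence is a palindrome, $d_k=d_{l-k}$. A short telescoping computation then shows this is equivalent to symmetry of the exponents themselves, namely that $\alpha_j+\alpha_{l+1-j}=\alpha_1+\alpha_l=:s$ is independent of $j$. I would record this equivalence first, since both directions are cleanest phrased in terms of the symmetry $\alpha_j+\alpha_{l+1-j}=s$.

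For the implication ``symmetric $\Rightarrow$ rational argument'' (the ``vice versa'' part) I would argue geometrically by pairing the $j$-th and $(l+1-j)$-th summands. Since $\alpha_j+\alpha_{l+1-j}=s$, each pair satisfies
\[
\zeta_{2n}^{\alpha_j}+\zeta_{2n}^{s-\alpha_j}=e^{\mathtt{i}\pi s/(2n)}\,2\cos\!\left(\tfrac{(2\alpha_j-s)\pi}{2n}\right),
\]
so every pair is a real multiple of the fixed unit vector $e^{\mathtt{i}\pi s/(2n)}$; if $l$ is odd the unpaired central term has exponent $s/2$ and is again a positive real multiple of the same vector. Summing gives $\eta=e^{\mathtt{i}\pi s/(2n)}\,C$ with $C\in\mathbb{R}$, and $C\neq0$ because $\eta\neq0$. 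Hence $\arg\left(\eta\right)$ equals $\tfrac{s\pi}{2n}$ or $\tfrac{s\pi}{2n}+\pi$, which lies in $\pi\mathbb{Q}$. This direction is completely elementary.

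For the forward implication I would start from Lemma~\ref{lem:z_with_rational_argument}: $\arg\left(\eta\right)\in\pi\mathbb{Q}$ forces $\arg\left(\eta\right)=\tfrac{m\pi}{2n}$, equivalently $\eta=\zeta_{2n}^{m}\overline{\eta}$ for some integer $m$. Substituting the representation \eqref{eq:lambda-i-alpha} yields
\[
\sum_{j=1}^{l}\zeta_{2n}^{\alpha_j}=\eta=\zeta_{2n}^{m}\overline{\eta}=\sum_{j=1}^{l}\zeta_{2n}^{m-\alpha_j},
\]
so the reflected exponents $m-\alpha_j$ furnish a second representation of $\eta$ by $l$ roots of unity. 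By Assumption~\ref{assu:l_is_minimal} this reflected sum is again of minimal length, and if the minimal representation is unique up to the normalisation of \eqref{eq:lambda-i-alpha}, the two exponent multisets must coincide, $\{\alpha_j\}=\{m-\alpha_j\}$. Ordering increasingly then gives $\alpha_j+\alpha_{l+1-j}=m$ for all $j$, which is the required symmetry and hence the palindromic gap condition.

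The main obstacle is precisely the uniqueness step just invoked. Because $\phi(2n)<2n$, the $2n$-th roots of unity satisfy nontrivial $\mathbb{Z}$-linear relations coming from $\Phi_{2n}$, so a priori $\eta$ could admit two genuinely different minimal representations, and I must exclude that the reflected sum is such an alternative. The difference of the two representations is a vanishing sum $\sum_j\zeta_{2n}^{\alpha_j}-\sum_j\zeta_{2n}^{m-\alpha_j}=0$ of $2l$ roots of unity (absorbing the sign via $\zeta_{2n}^{n}=-1$), and the strategy is to combine the structure theory of vanishing sums of roots of unity with the minimality of Assumption~\ref{assu:l_is_minimal}: a minimal representation can contain no proper vanishing subsum, which should force the only admissible relation to be the trivial cancellation matching the two multisets term by term. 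Turning this into a rigorous argument, rather than appealing to uniqueness as self-evident, is the technical heart of the proof; the remaining bookkeeping—passing between the encodings \eqref{eq:lambda-i-alpha} and \eqref{eq:lambda-i} and between gap-palindromy and exponent-symmetry—is routine.
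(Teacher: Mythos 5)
Your ``vice versa'' direction is correct and complete: the telescoping equivalence between palindromic gaps and the exponent symmetry $\alpha_j+\alpha_{l+1-j}=s$, followed by the pairing $\zeta_{2n}^{\alpha_j}+\zeta_{2n}^{s-\alpha_j}=2\cos\bigl(\tfrac{(2\alpha_j-s)\pi}{2n}\bigr)e^{\mathtt{i}\pi s/(2n)}$, is exactly the content of the paper's one-line ``vice versa'' step. The genuine gap is the one you flag yourself in the forward direction, and it is not a repairable technicality: the uniqueness of minimal representations is \emph{false} in general, and with it the claim that a minimal representation of a rational-argument $\eta$ must be palindromic. Take $2n=30$ and
\begin{equation*}
\eta=\zeta_{30}^{3}+\zeta_{30}^{9}+\zeta_{30}^{10}=\zeta_{30}^{5}+\zeta_{30}^{6}+\zeta_{30}^{12}\approx 2.4049\,\mathtt{i},\qquad\arg(\eta)=\tfrac{\pi}{2}=\tfrac{15\pi}{30}\in\pi\mathbb{Q}.
\end{equation*}
Both representations are minimal (length $1$ is impossible since $\left|\eta\right|\neq1$, and length $2$ is impossible since $\left|\eta\right|>2$), they are distinct multisets, and each is the reflection $\alpha\mapsto15-\alpha$ of the other --- precisely the situation your argument must exclude. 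Neither admits a palindromic notation in the form of Equation~\eqref{eq:lambda-i-alpha}: the possible gap sequences are $(6,1)$, $(1,23)$, $(23,6)$ for the first and $(1,6)$, $(6,18)$, $(18,1)$ for the second, and no length-$3$ arithmetic progression of exponents can have argument $\pi/2$. The difference of the two representations is the vanishing sum $\zeta_{30}^{3}+\zeta_{30}^{9}+\zeta_{30}^{10}+\zeta_{30}^{20}+\zeta_{30}^{21}+\zeta_{30}^{27}=0$, which is a Conway--Jones ``exotic'' minimal vanishing sum (of type $R_3-R_5$): it has \emph{no} proper vanishing subsum, so your proposed route ``minimality excludes vanishing subsums, hence only term-by-term cancellation is possible'' cannot be made to work. (Your strategy does succeed when $2n$ has at most two distinct prime factors: by the R\'edei--de Bruijn theorem every vanishing sum then decomposes into rotated prime cycles, and minimality of both sides eliminates everything except length-$2$ matchings. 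This covers all the paper's examples $n\leqq14$, but fails first at $n=15$.)

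This also clarifies how your approach differs from the paper's and why the paper's survives the counterexample. The paper does not reflect a given (minimal) representation; it \emph{constructs} a palindromic one from scratch: by Lemma~\ref{lem:z_with_rational_argument}, $\arg(\eta)=\frac{k\pi}{2n}$, so the positive real number $\left|\eta\right|=\eta e^{-\frac{\mathtt{i}k\pi}{2n}}$ lies in $\mathbb{Z}\left[\zeta_{2n}+\overline{\zeta_{2n}}\right]$ (even $k$) or $\mathbb{Z}\left[\zeta_{4n}+\overline{\zeta_{4n}}\right]$ (odd $k$); one writes it as an integer combination of the real pairs $\zeta^{j}+\overline{\zeta^{j}}$, converts negative coefficients into symmetric pairs via $-1=\zeta_{2n}^{n}$, and rotates everything by $e^{\frac{\mathtt{i}k\pi}{2n}}$, giving an exponent multiset symmetric about $k/2$. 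The price is that this representation is generally \emph{longer} than the minimal one: in the example above it has length $4$, namely $\eta=\zeta_{30}^{1}+\zeta_{30}^{7}+\zeta_{30}^{8}+\zeta_{30}^{14}$ with gaps $(6,1,6)$. So the proposition can only be read as ``$\eta$ admits \emph{some} notation as in Equation~\eqref{eq:lambda-i-alpha} with palindromic gaps'', not as a statement about the minimal notation of Assumption~\ref{assu:l_is_minimal}; to repair your proof you would have to abandon the reflection-plus-uniqueness idea and construct the symmetric notation directly, as the paper does. (The same example shows that the ``criterion'' obtained by combining the proposition with Assumption~\ref{assu:l_is_minimal} must be treated with care once $2n$ has three or more distinct prime factors.)
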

\begin{proof}
Lemma~\ref{lem:z_with_rational_argument} implies that the absolute
value of $\eta\in\mathbb{Z}\left[\zeta_{2n}\right]\mid\arg\left(\eta\right)\in\pi\mathbb{Q}$
has the following properties: $\left|\eta\right|\mathbb{\in\mathbb{Z}}\left[\zeta_{2n}+\overline{\zeta_{2n}}\right]$
for $\arg\left(\eta\right)=\frac{k\pi}{2n}$ with $even\text{ }k\in\mathbb{Z}$
and $\left|\eta\right|\mathbb{\in\mathbb{Z}}\left[\zeta_{4n}+\overline{\zeta_{4n}}\right]\setminus\mathbb{\mathbb{Z}}\left[\zeta_{2n}+\overline{\zeta_{2n}}\right]$
for $\arg\left(\eta\right)=\frac{k\pi}{2n}$ with $odd~k\in\mathbb{Z}$.
Multiplication of $\left|\eta\right|$ with $e^{\frac{ik\pi}{2n}}$
and rearrangement of the order of the summands will deliver an appropriate
notation for every $k$. Vice versa for every $\eta$ in appropriate
notation a $k\in\mathbb{Z}$ exists so that $\arg\left(\eta e^{-\frac{ik\pi}{2n}}\right)=0$,
hence $\arg\left(\eta\right)=\nicefrac{k\pi}{2n}\in\pi\mathbb{Q}$.
\end{proof}
Proposition~\ref{prop:Criterion_Eta_is_rational_or_not} and Assumption~\ref{assu:l_is_minimal}
can be combined to form a criterion to decide if $\eta$ has a rational
or an irrational argument.

\section{\label{sec:Minimal_Inflation_Multiplier}Minimal Inflation Multipliers
with Irrational Argument}

In the following chapter we will discuss the minimal inflation multiplier
with irrational argument so that $\eta\in\mathbb{Z}\left[\zeta_{2n}\right]\mid\arg\left(\eta\right)\notin\mathbb{\pi Q}$.

We recall the results from \citep[Chapter 2]{sym9020019}: The Perron–Frobenius
eigenvalue $\lambda_{1}\in\mathbb{N}_{0}\left[\mu_{n}\right]$ of
the substitution matrix $M$ of $\mathcal{T}$ can be written as sum
of pairs of complex conjugated roots of unity $\zeta_{2n}^{k}+\overline{\zeta_{2n}^{k}}$
or as sum of diagonals of a regular unit $n$-gon $\mu_{n,k}$ with
additional conditions:
\begin{equation}
\lambda_{1}=|\eta^{2}|=\eta\cdot\overline{\eta}=b_{0}+\sum_{k=1}^{\left\lfloor (n-1)/2\right\rfloor }b_{k}\left(\zeta_{2n}^{k}+\overline{\zeta_{2n}^{k}}\right)\;\;\;\;\;\;(b_{0},b_{k}\in\mathbb{\mathbb{N}}_{0})\label{eq:lambda-a-2}
\end{equation}
\begin{equation}
\lambda_{1}=\sum_{k=1}^{\left\lfloor n/2\right\rfloor }c_{k}\mu_{n,k}\;\;\;\;\;\;(c_{k}\in\mathbb{\mathbb{N}}_{0})\label{eq:lambda-a-3}
\end{equation}
By expanding Equation~\eqref{eq:lambda-a-2} with~\eqref{eq:lambda-i}
we can conclude: 
\begin{equation}
b_{0}=\sum_{k=1-n}^{n}a_{k}^{2}\geq\sum_{k=1-n}^{n}a_{k}=l\label{eq:bo_ge_l}
\end{equation}
Aperiodicity of $\mathcal{T}$ and primitivity of its substitution
matrix for $n\geq4$ are ensured by the following conditions for the
coefficients $c_{k}$:
\begin{equation}
\max\left(c_{k}\right)\geq1\;\;\;\;\;\;(k\geq2;\;odd\;n\geq5)\label{eq:cond_n_odd_c}
\end{equation}
\begin{equation}
\min\left(\left(\max\left(c_{k}\right),\;odd\;k\right),\left(\max\left(c_{k}\right),\;even\;k\right)\right)\geq1\;\;\;\;\;\;(even\;n\geq4)\label{eq:cond_n_even_c}
\end{equation}
The same properties can be ensured by the following conditions for
the coefficients $b_{k}$:
\begin{equation}
b_{k}\geq b_{k+2}\;\;\;\;\;\;(\left\lfloor (n-1)/2\right\rfloor \geq k+2>k\geq0)\label{eq:cond_b}
\end{equation}
\begin{equation}
\max\left(b_{1},b_{2}\right)\geq1\;\;\;\;\;\;(odd\;n\geq5)\label{eq:cond_n_odd_b}
\end{equation}
\begin{equation}
\min\left(b_{0},b_{1}\right)\geq1\;\;\;\;\;\;(even\;n\geq4)\label{eq:cond_n_even_b}
\end{equation}

\begin{lem}
\label{lem:irrat_arg_need_l_ge_3}Since $\arg\left(\zeta_{2n}^{\alpha{}_{1}}+\zeta_{2n}^{\alpha{}_{2}}\right)=\frac{\arg\left(\zeta_{2n}^{\alpha{}_{1}}\right)+\arg\left(\zeta_{2n}^{\alpha{}_{2}}\right)}{2}\in\mathbb{\pi Q}$
with $\alpha{}_{1}\neq-\alpha{}_{2}$ the minimal inflation multiplier
\textup{$\eta_{min.irr.}$} must be a sum of $l\geq3$ roots of unity
to ensure\textup{ $\arg\left(\eta_{min.irr.}\right)\notin\mathbb{\pi Q}$}.
\end{lem}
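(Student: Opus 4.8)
The plan is to prove the contrapositive: I would show that every $\eta\in\mathbb{Z}\left[\zeta_{2n}\right]\setminus\left\{ 0\right\} $ expressible, as in Equation~\eqref{eq:lambda-i-alpha} under Assumption~\ref{assu:l_is_minimal}, as a sum of $l\leq2$ roots of unity necessarily has rational argument $\arg\left(\eta\right)\in\pi\mathbb{Q}$. Since $\eta_{min.irr.}$ is required to satisfy $\arg\left(\eta_{min.irr.}\right)\notin\pi\mathbb{Q}$, this immediately forces $l\geq3$. The case $l=1$ is trivial, because then $\eta=\zeta_{2n}^{\alpha_{1}}$ is itself a root of unity, so by Lemma~\ref{lem:z_arg(z)_rational_arg(z)_mul_kpi/n} it cannot have irrational argument; explicitly $\arg\left(\eta\right)=\nicefrac{\alpha_{1}\pi}{n}\in\pi\mathbb{Q}$.

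For $l=2$ I would make the identity quoted in the statement explicit by the sum-to-product formula
\[
\zeta_{2n}^{\alpha_{1}}+\zeta_{2n}^{\alpha_{2}}=2\cos\!\left(\frac{\left(\alpha_{1}-\alpha_{2}\right)\pi}{2n}\right)\zeta_{4n}^{\alpha_{1}+\alpha_{2}}.
\]
Here the factor $\zeta_{4n}^{\alpha_{1}+\alpha_{2}}$ is a $4n$-th root of unity and hence contributes the rational argument $\nicefrac{\left(\alpha_{1}+\alpha_{2}\right)\pi}{2n}$, while the real scalar $2\cos\left(\cdot\right)$ only changes the sign. Consequently $\arg\left(\eta\right)$ equals either $\nicefrac{\left(\alpha_{1}+\alpha_{2}\right)\pi}{2n}$ or $\nicefrac{\left(\alpha_{1}+\alpha_{2}\right)\pi}{2n}+\pi$, both of which lie in $\pi\mathbb{Q}$ and are consistent with the form $\nicefrac{k\pi}{2n}$ predicted by Lemma~\ref{lem:z_with_rational_argument}. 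This is precisely the assertion in the statement, and the single formula also subsumes the conjugate subcase $\alpha_{1}=-\alpha_{2}$, where $\zeta_{4n}^{\alpha_{1}+\alpha_{2}}=1$, the sum reduces to the real number $2\cos\left(\nicefrac{\alpha_{1}\pi}{n}\right)$, and the argument is simply $0$ or $\pi$.

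The only step requiring care — and the one I expect to be the real, if minor, obstacle — is the degenerate subcase where the cosine factor vanishes, i.e.\ $\alpha_{1}-\alpha_{2}\equiv n\pmod{2n}$. There the two roots are antipodal, their sum is $0$, and the argument is undefined; but this is harmless, since $\eta=0$ is explicitly excluded from the definition of $\eta_{min.irr.}$, and under Assumption~\ref{assu:l_is_minimal} a cancelling antipodal pair would not survive in an irreducible (minimal-$l$) representation anyway. Combining the cases $l=1$ and $l=2$, every nonzero sum of at most two $2n$-th roots of unity has rational argument, so $\arg\left(\eta_{min.irr.}\right)\notin\pi\mathbb{Q}$ cannot be achieved with fewer than three summands, which yields $l\geq3$ as claimed.
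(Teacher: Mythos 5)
Your proposal is correct and takes essentially the same approach as the paper: the paper gives no separate proof at all, its entire justification being the bisector identity embedded in the lemma's statement, and your sum-to-product formula $\zeta_{2n}^{\alpha_{1}}+\zeta_{2n}^{\alpha_{2}}=2\cos\bigl(\tfrac{(\alpha_{1}-\alpha_{2})\pi}{2n}\bigr)\zeta_{4n}^{\alpha_{1}+\alpha_{2}}$ is precisely the rigorous form of that identity. If anything your write-up is tighter than the paper's: the explicit cosine factor shows the quoted identity can be off by $\pi$ when the cosine is negative (harmless for membership in $\pi\mathbb{Q}$), and you also settle the $l=1$ case and the degenerate antipodal case $\eta=0$, which the paper leaves implicit.
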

For $odd\;n\geqq5$ we use Equations~\eqref{eq:lambda-a-2}, \eqref{eq:bo_ge_l},
\eqref{eq:cond_b}, \eqref{eq:cond_n_odd_b} and Lemma~\ref{lem:irrat_arg_need_l_ge_3}
to identify the minimal Perron-Frobenius eigenvalue $\lambda_{1\;min.irr.}=\eta_{min.irr.}\cdot\overline{\eta_{min.irr.}}$.
We can conclude that $b_{0}\geqq3$ and either $b_{1}\geqq1$ or $b_{2}\geqq1$.
As a consequence the first candidates for $\lambda_{1\;min.irr.}$
to be checked are the terms $3+\zeta_{2n}^{1}+\overline{\zeta_{2n}^{1}}$
and $3+\zeta_{2n}^{2}+\overline{\zeta_{2n}^{2}}$.

For $even\;n\geqq4$ we use a similar approach using Equations~\eqref{eq:lambda-a-2},
\eqref{eq:bo_ge_l}, \eqref{eq:cond_b}, \eqref{eq:cond_n_even_b}
and Lemma~\ref{lem:irrat_arg_need_l_ge_3}. Here we can conclude
that $b_{0}\geqq3$ and $b_{1}\geqq1$, so the first candidate for
$\lambda_{1\;min.irr.}$ to be checked is the term $3+\zeta_{2n}^{1}+\overline{\zeta_{2n}^{1}}$.

If no appropriate $\eta$ exists for the initial candidates then in
both cases Equation~\eqref{eq:cond_b} provides a guideline to identify
the next candidates by adding $\zeta_{2n}^{0}=1$ or an appropriate
$\zeta_{2n}^{k}$+$\overline{\zeta_{2n}^{k}}$.

The initial candidates for $\lambda_{1}$ as well as all other candidates
with $b_{0}=3$ can be checked with the following equation for $\lambda_{1}$.
For this purpose we expand Equation~\eqref{eq:lambda-a-2} with~\eqref{eq:lambda-i-alpha}
and $l=3$:
\begin{equation}
\lambda_{1}=\eta\overline{\eta}=3+\zeta_{2n}^{\alpha{}_{2}-\alpha{}_{1}}+\overline{\zeta_{2n}^{\alpha{}_{2}-\alpha{}_{1}}}+\zeta_{2n}^{\alpha{}_{3}-\alpha{}_{1}}+\overline{\zeta_{2n}^{\alpha{}_{3}-\alpha{}_{1}}}+\zeta_{2n}^{\alpha{}_{3}-\alpha{}_{2}}+\overline{\zeta_{2n}^{\alpha{}_{3}-\alpha{}_{2}}}
\end{equation}

The check itself can be completed by solving an equation system which
may or may not have solutions for $\alpha_{1},\;\alpha_{2},\;\alpha_{3}$
which determine the inflation multiplier $\eta=3+\zeta_{2n}^{\alpha{}_{1}}+\zeta_{2n}^{\alpha{}_{2}}+\zeta_{2n}^{\alpha{}_{3}}$.

For $l\geqq4$ we we will use a numerical approach. A computer algorithm
is designed to decide if an $\eta=\sum_{k=1}^{l}\zeta_{2n}^{\alpha_{k}}$
exists for arbitrary $\lambda_{1}\in\mathbb{N}_{0}\left[\mu_{n}\right]$,
$l\geqq3$ and~$n\geqq2$ by checking all combinations of $\alpha_{1},\alpha_{2},\alpha_{3}...\alpha_{l}$
with some reasonable optimisations. 

It is known that for $n\in\left\{ 2,3\right\} $ the sets of algebraic
integers in the $4$th and $6$th cyclotomic field $\mathbb{Z}\left[\zeta_{4}\right]$
and $\mathbb{Z}\left[\zeta_{6}\right]$ form lattices - uniform, discrete
and periodic point sets - in the complex plane $\mathbb{C}$. In detail
$\mathbb{Z}\left[\zeta_{4}\right]$ is equivalent to the set of Euler
integers and $\mathbb{Z}\left[\zeta_{6}\right]$ is equivalent to
the Eisenstein integers. We investigate the lattice points close to
$0$ and find $\eta_{min.irr.}=\zeta_{2n}^{0}+\zeta_{2n}^{0}+\zeta_{2n}^{1}\mid n\in\left\{ 2,3\right\} $.

For $n=4$ we can show numerically that $\eta=\zeta_{8}^{0}+\zeta_{8}^{0}+\zeta_{8}^{1}$
and $\lambda_{1}=6+2\left(\zeta_{8}^{1}+\overline{\zeta_{8}^{1}}\right)$
are minima under preceding conditions for $l=3$. Furthermore we can
show that for $l=4$ and $0\leqq\alpha_{1}<\alpha_{2}<\alpha_{3}<\alpha_{4}<2n$
no $\lambda_{1}$ exist which fulfills the preceding conditions, in
detail $\arg\left(\zeta_{8}^{0}+\zeta_{8}^{1}+\zeta_{8}^{2}+\zeta_{8}^{3}\right)=\nicefrac{3\pi}{8}\in\pi\mathbb{Q}$
and $\left|\zeta_{8}^{0}+\zeta_{8}^{1}+\zeta_{8}^{3}+\zeta_{8}^{6}\right|^{2}=4-\left(\zeta_{8}^{1}+\overline{\zeta_{8}^{1}}\right)=4-\mu_{4,2}\notin\mathbb{N}\left(\mu_{4}\right)$.
As a consequence at least two $\alpha{}_{k}$ must have identical
values which is also true all $l\geqq5$. As a consequence we note
that $\lambda_{1}\geqq6+\zeta_{8}^{1}+\overline{\zeta_{8}^{1}}$ for
$l\geqq4$. We check the lower boundary which is smaller than the
minimum of $\lambda_{1}$ for $l=3$ and find $\eta_{min.irr.}=\zeta_{8}^{0}+\zeta_{8}^{1}+\zeta_{8}^{1}+\zeta_{8}^{3}$
with $\lambda_{1\;min.irr.}=6+\zeta_{8}^{1}+\overline{\zeta_{8}^{1}}$.

For $n=6$ we have $\zeta_{12}^{2}+\overline{\zeta_{12}^{2}}=1$ and
so we can note $3+\zeta_{12}^{1}+\overline{\zeta_{12}^{1}}<3+\zeta_{12}^{1}+\overline{\zeta_{12}^{1}}+\zeta_{12}^{2}+\overline{\zeta_{12}^{2}}=4+\zeta_{12}^{1}+\overline{\zeta_{12}^{1}}<3+\zeta_{12}^{1}+\overline{\zeta_{12}^{1}}+2\left(\zeta_{12}^{2}+\overline{\zeta_{12}^{2}}\right)=5+\zeta_{12}^{1}+\overline{\zeta_{12}^{1}}$.
For the first candidate we already know that no solution for $\eta$
exists. For the second and third candidate we find $\eta_{min.irr.}=\zeta_{12}^{0}+\zeta_{12}^{1}+\zeta_{12}^{3}$.

For $n=8$ we have $\zeta_{16}^{3}+\overline{\zeta_{16}^{3}}<1<\zeta_{16}^{2}+\overline{\zeta_{16}^{2}}$
and so we can note $3+\zeta_{16}^{1}+\overline{\zeta_{16}^{1}}<3+\zeta_{16}^{1}+\overline{\zeta_{16}^{1}}+\zeta_{16}^{3}+\overline{\zeta_{16}^{3}}<4+\zeta_{16}^{1}+\overline{\zeta_{16}^{1}}<3+\zeta_{16}^{1}+\overline{\zeta_{16}^{1}}+\zeta_{16}^{2}+\overline{\zeta_{16}^{2}}+\zeta_{16}^{3}+\overline{\zeta_{16}^{3}}$.
For the first candidate we already know that no solution for $\eta$
exists. For the second candidate we find $\eta_{min.irr.}=\zeta_{16}^{0}+\zeta_{16}^{1}+\zeta_{16}^{4}$.

For $n=12$ we have $\zeta_{24}^{4}+\overline{\zeta_{24}^{4}}=1$
and so we can note $3+\zeta_{24}^{1}+\overline{\zeta_{24}^{1}}<4+\zeta_{24}^{1}+\overline{\zeta_{24}^{1}}<3+\zeta_{24}^{1}+\overline{\zeta_{24}^{1}}+\zeta_{24}^{3}+\overline{\zeta_{24}^{3}}<4+\zeta_{24}^{1}+\overline{\zeta_{24}^{1}}+\zeta_{24}^{3}+\overline{\zeta_{24}^{3}}=3+\zeta_{24}^{1}+\overline{\zeta_{24}^{1}}+\zeta_{24}^{3}+\overline{\zeta_{24}^{3}}+\zeta_{24}^{4}+\overline{\zeta_{24}^{4}}<5+\zeta_{24}^{1}+\overline{\zeta_{24}^{1}}$.
For the first candidate we already know that no solution for $\eta$
exists. For the second candidate we can show that numerically. For
the third term no solution exists. For the fourth and fifth candidates
we find $\eta_{min.irr.}=\zeta_{24}^{0}+\zeta_{24}^{1}+\zeta_{24}^{4}$.

For $even\;n\geqq10$ with $n\mathrm{\;mod\;}4=2$ we have $\zeta_{2n}^{3}+\overline{\zeta_{2n}^{3}}>1$
and so we can note $3+\zeta_{2n}^{1}+\overline{\zeta_{2n}^{1}}<4+\zeta_{2n}^{1}+\overline{\zeta_{2n}^{1}}<3+\zeta_{2n}^{1}+\overline{\zeta_{2n}^{1}}+\zeta_{2n}^{3}+\overline{\zeta_{2n}^{3}}$.
For the first candidate we already know that no solution for $\eta$
exists. For the second candidate we find $\eta_{min.irr.}=\zeta_{2n}^{0}+\zeta_{2n}^{1}+\zeta_{2n}^{(n+2)/4}+\zeta_{2n}^{(3n+2)/4}$.

For $even\;n\geqq16$ with $n\mathrm{\;mod\;}4=0$ we have $\zeta_{2n}^{3}+\overline{\zeta_{2n}^{3}}>1$
and so we can note $3+\zeta_{2n}^{1}+\overline{\zeta_{2n}^{1}}<4+\zeta_{2n}^{1}+\overline{\zeta_{2n}^{1}}<3+\zeta_{2n}^{1}+\overline{\zeta_{2n}^{1}}+\zeta_{2n}^{3}+\overline{\zeta_{2n}^{3}}<5+\zeta_{2n}^{1}+\overline{\zeta_{2n}^{1}}<4+\zeta_{2n}^{1}+\overline{\zeta_{2n}^{1}}+\zeta_{2n}^{3}+\overline{\zeta_{2n}^{3}}<6+\zeta_{2n}^{1}+\overline{\zeta_{2n}^{1}}<3+\zeta_{2n}^{1}+\overline{\zeta_{2n}^{1}}+\zeta_{2n}^{2}+\overline{\zeta_{2n}^{2}}+\zeta_{2n}^{3}+\overline{\zeta_{2n}^{3}}$.
For the first and the third candidate no solution for $\eta$ exists.
For the second and the forth candidate no solution for $\eta$ were
found.
\begin{conjecture}
\label{conj:no_eta_for_4plusmu2_or_5plusmu2}For \textup{$even\;n\geqq16$
with $n\;\mathrm{mod\;}4=0$} no \textup{$\eta\in\mathbb{Z}\left[\zeta_{2n}\right]$}
exists so that \textup{$\lambda_{1}=\eta\overline{\eta}=4+\zeta_{2n}^{1}+\overline{\zeta_{2n}^{1}}$}
or \textup{$\lambda_{1}=\eta\overline{\eta}=5+\zeta_{2n}^{1}+\overline{\zeta_{2n}^{1}}$}.
\end{conjecture}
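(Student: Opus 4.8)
The plan is to recast the statement as a representation problem for a binary quadratic form over the real cyclotomic ring and then to combine archimedean size bounds with congruence obstructions at the primes above $2$. Write $K=\mathbb{Q}(\zeta_{2n})$, $K^{+}=\mathbb{Q}(\zeta_{2n}+\overline{\zeta_{2n}})$ and put $t=\zeta_{2n}+\overline{\zeta_{2n}}=2\cos(\pi/n)$. Since $\zeta_{2n}^{2}-t\,\zeta_{2n}+1=0$, the ring $\mathbb{Z}[\zeta_{2n}]=\mathbb{Z}[t]\oplus\mathbb{Z}[t]\,\zeta_{2n}$ is free of rank two over $\mathbb{Z}[t]=\mathbb{Z}[\zeta_{2n}+\overline{\zeta_{2n}}]$. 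Writing $\eta=u+v\,\zeta_{2n}$ with $u,v\in\mathbb{Z}[t]$ and using $\overline{\eta}=u+v\,\overline{\zeta_{2n}}$ gives the exact identity $\lambda_{1}=\eta\overline{\eta}=u^{2}+uvt+v^{2}$. Thus Conjecture~\ref{conj:no_eta_for_4plusmu2_or_5plusmu2} is equivalent to saying that the totally positive definite form $Q(u,v)=u^{2}+uvt+v^{2}$ (its discriminant $t^{2}-4$ is totally negative, as $|2\cos(k\pi/n)|<2$) represents neither $4+t$ nor $5+t$ over $\mathbb{Z}[t]$, for every $even\;n\geqq16$ with $n\;\mathrm{mod}\;4=0$. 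This is the analytic counterpart of the $b_{0}=\sum a_{k}^{2}$ bookkeeping behind Equations~\eqref{eq:lambda-a-2} and~\eqref{eq:bo_ge_l}, but it now also covers $\eta$ with negative coordinates.

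First I would extract the archimedean constraints. Under the embedding $t\mapsto 2\cos(k\pi/n)$ one has $Q=(u+vt/2)^{2}+v^{2}\sin^{2}(k\pi/n)$ while the target value lies in $(2,6)$; hence $|\sigma_{k}(v)|<\sqrt{6}/\sin(k\pi/n)$ at every place, and combined with $|N_{K^{+}/\mathbb{Q}}(v)|\geq1$ for $v\neq0$ this restricts the admissible $v$ to a finite list once $n$ is fixed. The purpose of this step is to reduce, for each $n$, to finitely many shapes of $(u,v)$ that must then be excluded arithmetically.

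The decisive input should be reduction modulo the primes above $2$. The hypothesis $n\;\mathrm{mod}\;4=0$ guarantees that $2$ ramifies in $\mathbb{Z}[t]$ with $e(\mathfrak{q})\geq2$. Choosing a prime $\mathfrak{q}\mid2$ at which $t$ is a uniformiser (for power-of-two $n$ the unique such prime, with residue field $\mathbb{F}_{2}$), reduction to $\mathbb{Z}[t]/\mathfrak{q}^{2}\cong\mathbb{F}_{2^{f}}[\epsilon]/(\epsilon^{2})$ with $t\mapsto\epsilon$ sends $Q$ to $(u_{0}+v_{0})^{2}+u_{0}v_{0}\,\epsilon$, because squaring kills the $\epsilon$-parts in characteristic $2$. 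The subcase $v=0$ (where $Q$ is a square and has no $\epsilon$-part) is excluded immediately, since $4+t\mapsto\epsilon$ and $5+t\mapsto1+\epsilon$ both carry a nonzero $\epsilon$-part. For $5+t$ one needs $(u_{0}+v_{0})^{2}=1$ and $u_{0}v_{0}=1$, i.e. $u_{0}^{2}+u_{0}=1$ must be solvable in $\mathbb{F}_{2^{f}}$; this is the Artin--Schreier condition $\mathrm{Tr}_{\mathbb{F}_{2^{f}}/\mathbb{F}_{2}}(1)=0$, which fails exactly when $f$ is odd. Hence $5+t$ is ruled out whenever the residue degree $f(\mathfrak{q})$ is odd, in particular in every power-of-two case ($f=1$).

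The hard part will be the two configurations that survive this test: $4+t$ for all $n$, and $5+t$ when $f$ is even. For $4+t$ the mod-$\mathfrak{q}^{2}$ test is always passable ($u_{0}=v_{0}=1$), and in fact I expect $Q$ to represent $4+t$ in every completion, so that by the Hasse norm theorem $4+t$ is a norm from $K^{\times}$; the genuine obstruction is then \emph{integrality} — whether a field element of the correct norm can be moved by a unit into $\mathbb{Z}[\zeta_{2n}]$ — which is a class-group–flavoured condition that does not collapse to a single congruence. My plan here would be to push the local analysis to higher powers $\mathfrak{q}^{j}$ and to bring in a second prime (for $m>1$, a prime dividing the odd part of $2n$) in order to manufacture an obstruction uniform in $n$, then finish off the finitely many surviving $(u,v)$ from the archimedean step. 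I regard producing such a uniform integrality obstruction as the principal difficulty, and it is presumably the reason the statement is recorded here only as a conjecture rather than proved.
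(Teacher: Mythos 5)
There is an important point of order before any comparison: the paper does \emph{not} prove this statement. It is stated as a conjecture whose ``status \ldots is subject to further research'', supported only by a numerical verification for $n\leqq100$ (and Table~\ref{tab:min-infl-with-dto} accordingly labels the affected entries ``numeric for $16\leqq n\leqq100$, conjecture for $n>100$''). So there is no reference proof to measure your attempt against, and your closing diagnosis --- that the statement is recorded as a conjecture precisely because the remaining step is hard --- is accurate. Your write-up is honestly framed as a plan, and part of it is genuine progress beyond what the paper establishes: the decomposition $\mathbb{Z}\left[\zeta_{2n}\right]=\mathbb{Z}\left[t\right]\oplus\mathbb{Z}\left[t\right]\zeta_{2n}$ with $t=\zeta_{2n}+\overline{\zeta_{2n}}$ is correct, the identity $\eta\overline{\eta}=u^{2}+uvt+v^{2}$ makes the conjecture equivalent to a representation problem for this form over $\mathbb{Z}\left[t\right]$, and your $\bmod\,\mathfrak{q}^{2}$ computation validly excludes $5+t$ \emph{when its hypotheses hold}.

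The gaps are two. First, your congruence argument requires a prime $\mathfrak{q}\mid2$ of $\mathbb{Z}\left[t\right]$ at which $t$ is a uniformiser, and such a prime exists only when $n$ is a power of two. For $n=2^{a}m$ with odd $m>1$ (e.g.\ $n=20,24,36,40$, all covered by the conjecture), note that $t=\zeta_{2n}^{-1}\left(1+\zeta_{2n}^{2}\right)$ and that $-\zeta_{2n}^{2}$ is a primitive $n$-th root of unity when $4\mid n$; since $\Phi_{n}(1)=1$ whenever $n$ has two distinct prime factors, $t$ is a \emph{unit} of $\mathbb{Z}\left[\zeta_{2n}\right]$, hence a unit at every prime above $2$. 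So the identification $\mathbb{Z}\left[t\right]/\mathfrak{q}^{2}\cong\mathbb{F}_{2^{f}}\left[\epsilon\right]/\left(\epsilon^{2}\right)$, $t\mapsto\epsilon$, is unavailable there, and your exclusion of $5+t$ really covers only $n=2^{a}$ (where indeed $f=1$), not all $n\equiv0\pmod 4$ with odd residue degree; the local analysis would have to be redone with $t$ reducing to a unit. Second, the case $4+t$ --- which must be eliminated to justify the table entry $\lambda_{1}=4+\mu_{n,4}$ --- passes every local test you set up, and what you propose for it (higher powers $\mathfrak{q}^{j}$, a second prime, an integrality/class-group obstruction on top of the Hasse norm theorem) is a program, not an argument. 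Moreover your archimedean bound $\left|\sigma_{k}(v)\right|<\sqrt{6}/\sin\left(k\pi/n\right)$ deteriorates as $n$ grows, so the ``finitely many surviving $(u,v)$'' step is a per-$n$ finite computation; in that respect the plan, as it stands, cannot do better in principle than the paper's own numerical check for $n\leqq100$. Net assessment: a correct reformulation, a valid partial result for $n$ a power of two, and an honest identification of the true difficulty --- but not a proof, which matches the statement's conjectural status in the paper.
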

The status of Conjecture~\ref{conj:no_eta_for_4plusmu2_or_5plusmu2}
is subject to further research. However, for $n\leqq100$ it was confirmed
numerically. For the fifth term we find $\eta_{min.irr.}=\zeta_{2n}^{0}+\zeta_{2n}^{1}+\zeta_{2n}^{(n+2)/4}+\zeta_{2n}^{(3n+2)/4}$.

For $n=5$ we have $1+\zeta_{10}^{2}+\overline{\zeta_{10}^{2}}=\zeta_{10}^{1}+\overline{\zeta_{10}^{1}}$
and so we can note $2+\zeta_{10}^{1}+\overline{\zeta_{10}^{1}}=3+\zeta_{10}^{2}+\overline{\zeta_{10}^{2}}<3+2\left(\zeta_{10}^{2}+\overline{\zeta_{10}^{2}}\right)<3+\zeta_{10}^{1}+\overline{\zeta_{10}^{1}}=4+\zeta_{10}^{2}+\overline{\zeta_{10}^{2}}<3+3\left(\zeta_{10}^{2}+\overline{\zeta_{10}^{2}}\right)$.
For the first and second candidate term we find $\eta=\zeta_{10}^{0}+\zeta_{10}^{1}$
with rational argument: $\arg\left(\zeta_{10}^{0}+\zeta_{10}^{1}\right)=\nicefrac{\pi}{1\text{0}}$.
For the third term no solution exists. For the fourth candidate we
find $\eta_{min.irr.}=\zeta_{10}^{0}+\zeta_{10}^{1}+\zeta_{10}^{3}$.

For $odd\;n\geqq7$ we have $\zeta_{2n}^{2}+\overline{\zeta_{2n}^{2}}>1$
and so we can note $3+\zeta_{2n}^{2}+\overline{\zeta_{2n}^{2}}<3+\zeta_{2n}^{1}+\overline{\zeta_{2n}^{1}}<4+\zeta_{2n}^{2}+\overline{\zeta_{2n}^{2}}<3+2\left(\zeta_{2n}^{2}+\overline{\zeta_{2n}^{2}}\right)$.
For $odd\;n\geqq9$ we can additionally note $3+\zeta_{2n}^{1}+\overline{\zeta_{2n}^{1}}<3+\zeta_{2n}^{2}+\overline{\zeta_{2n}^{2}}+\zeta_{2n}^{4}+\overline{\zeta_{2n}^{4}}$.
As a consequence there are no smaller candidates than the initial
candidates. For $3+\zeta_{2n}^{2}+\overline{\zeta_{2n}^{2}}$ we find
$\alpha_{2}-\alpha_{1}=2$ and $\alpha_{3}-\alpha_{1}=\nicefrac{n}{2}+1\notin\mathbb{Z}$
for $odd\;n$, so it is not a valid solution. For $3+\zeta_{2n}^{2}+\overline{\zeta_{2n}^{2}}$
we find $\alpha_{2}-\alpha_{1}=1$ and $\alpha_{3}-\alpha_{1}=\nicefrac{(n+1)}{2}\in\mathbb{Z}$
for $odd\;n$.

As a consequence the solution $\eta_{min.irr.}=\zeta_{2n}^{0}+\zeta_{2n}^{1}+\zeta_{2n}^{\nicefrac{(n+1)}{2}}$
applies for $odd\;n\geqq5$ in general. 

A summary of the results for $\eta_{min.irr.}$ and $\lambda_{1}$
can be found in Table~\ref{tab:min-infl-with-dto}.

All results for $\eta_{min.irr.}$ meet the conditions in Proposition~\ref{prop:Criterion_Eta_is_rational_or_not}
and Assumption~\ref{assu:l_is_minimal}, so that $\arg\left(\eta_{min.irr.}\right)\notin\mathbb{\pi Q}$.
\begin{rem}
Because of the symmetry properties of $\mathbb{Z}\left[\zeta_{2n}\right]$
in detail dihedral symmetry $D_{2n}$ and $\arg\left(\eta_{min.irr.}\right)\notin\pi\mathbb{Q}$
for this and the following cases in total always $4n$ solutions for
$\eta_{min.irr.}$ exist. For simplification we just note one representative
solution.
\end{rem}
\begin{table}
\caption{\label{tab:min-infl-with-dto}Minimal inflation multipliers $\eta_{min.irr.}$
of CASTs with DTO and vertices supported by the $2n$-th cyclotomic
field $\mathbb{Q}\left(\zeta_{2n}\right)$}

\begin{center}
\resizebox{\textwidth}{!}{%

\begin{tabular}{|>{\centering}m{0.005\textwidth}|>{\centering}m{0.15\textwidth}|>{\centering}m{0.33\textwidth}|>{\centering}m{0.4\textwidth}|>{\centering}m{0.17\textwidth}|}
\hline 
\multicolumn{2}{|c|}{$n$} & Minimal inflation multiplier $\eta_{min.irr.}$ & Minimal Perron-Frobenius eigenvalue $\lambda_{1\;min.irr.}=\eta_{min.irr.}\overline{\eta_{min.irr.}}$ & Approach\tabularnewline
\hline 
\hline 
\multicolumn{2}{|c|}{$2$} & $2+\zeta_{4}^{1}$ & $5$ & analytic\tabularnewline
\hline 
\multicolumn{2}{|c|}{$3$} & $2+\zeta_{6}^{1}$ & $3$ & analytic\tabularnewline
\hline 
\multicolumn{2}{|c|}{$4$} & $1+2\zeta_{8}^{1}+\zeta_{8}^{3}$ & $6+\zeta_{8}^{1}+\overline{\zeta_{8}^{1}}=6+\mu_{4,2}$ & numeric\tabularnewline
\hline 
\multicolumn{2}{|c|}{$6$} & $1+\zeta_{12}^{1}+\zeta_{12}^{3}$ & $4+\zeta_{12}^{1}+\overline{\zeta_{12}^{1}}=4+\mu_{6,2}$ & analytic\tabularnewline
\hline 
\multicolumn{2}{|c|}{$8$} & $1+\zeta_{16}^{1}+\zeta_{16}^{4}$ & $3+\zeta_{16}^{1}+\overline{\zeta_{16}^{1}}+\zeta_{16}^{3}+\overline{\zeta_{16}^{3}}=3+\mu_{8,4}$ & analytic\tabularnewline
\hline 
\multicolumn{2}{|c|}{$12$} & $1+\zeta_{24}^{1}+\zeta_{24}^{4}$ & $4+\zeta_{24}^{1}+\overline{\zeta_{24}^{1}}+\zeta_{24}^{3}+\overline{\zeta_{24}^{3}}=4+\mu_{12,4}$ & analytic\tabularnewline
\hline 
\multicolumn{2}{|c|}{$odd\;n\geqq5$} & $1+\zeta_{2n}^{1}+\zeta_{2n}^{(n+1)/2}$ & $3+\zeta_{2n}^{1}+\overline{\zeta_{2n}^{1}}=3+\mu_{n,2}$ & analytic\tabularnewline
\cline{2-5} 
 & $5$ & $1+\zeta_{10}^{1}+\zeta_{10}^{3}$ & $3+\zeta_{10}^{1}+\overline{\zeta_{10}^{1}}=3+\mu_{5,2}$ & analytic\tabularnewline
\cline{2-5} 
 & $7$ & $1+\zeta_{14}^{1}+\zeta_{14}^{4}$ & $3+\zeta_{14}^{1}+\overline{\zeta_{14}^{1}}=3+\mu_{7,2}$ & analytic\tabularnewline
\cline{2-5} 
 & $9$ & $1+\zeta_{18}^{1}+\zeta_{18}^{5}$ & $3+\zeta_{18}^{1}+\overline{\zeta_{18}^{1}}=3+\mu_{9,2}$ & analytic\tabularnewline
\cline{2-5} 
 & \multicolumn{4}{c|}{$\ldots$}\tabularnewline
\hline 
\multicolumn{1}{|>{\centering}m{0.005\textwidth}}{} & $even\;n\geqq10$ \linebreak $n\;\mathrm{mod\;}4=2$ & $1+\zeta_{2n}^{1}+\zeta_{2n}^{(n+2)/4}+\zeta_{2n}^{(3n+2)/4}$ & $4+\zeta_{2n}^{1}+\overline{\zeta_{2n}^{1}}=4+\mu_{n,2}$ & analytic\tabularnewline
\cline{2-5} 
 & $10$ & $1+\zeta_{20}^{1}+\zeta_{20}^{3}+\zeta_{20}^{8}$ & $4+\zeta_{20}^{1}+\overline{\zeta_{20}^{1}}=4+\mu_{10,2}$ & analytic\tabularnewline
\cline{2-5} 
 & $14$ & $1+\zeta_{28}^{1}+\zeta_{28}^{4}+\zeta_{28}^{11}$ & $4+\zeta_{28}^{1}+\overline{\zeta_{28}^{1}}=4+\mu_{14,2}$ & analytic\tabularnewline
\cline{2-5} 
 & $18$ & $1+\zeta_{36}^{1}+\zeta_{36}^{5}+\zeta_{36}^{14}$ & $4+\zeta_{36}^{1}+\overline{\zeta_{36}^{1}}=4+\mu_{18,2}$ & analytic\tabularnewline
\cline{2-5} 
 & \multicolumn{4}{c|}{$\ldots$}\tabularnewline
\hline 
\multicolumn{1}{|>{\centering}m{0.005\textwidth}}{} & $even\;n\geqq16$ \linebreak $n\;\mathrm{mod\;}4=0$ & $1+\zeta_{2n}^{1}+\zeta_{2n}^{(n-2)/2}+\zeta_{2n}^{(n+4)/2}$ & $4+\zeta_{2n}^{1}+\overline{\zeta_{2n}^{1}}+\zeta_{2n}^{3}+\overline{\zeta_{2n}^{3}}=4+\mu_{n,4}$ & numeric for $16\leqq n\leqq100$,

conjecture for $n>100$\tabularnewline
\cline{2-5} 
 & $16$ & $1+\zeta_{32}^{1}+\zeta_{32}^{7}+\zeta_{32}^{10}$ & $4+\zeta_{32}^{1}+\overline{\zeta_{32}^{1}}+\zeta_{32}^{3}+\overline{\zeta_{32}^{3}}=4+\mu_{16,4}$ & numeric\tabularnewline
\cline{2-5} 
 & $20$ & $1+\zeta_{40}^{1}+\zeta_{40}^{9}+\zeta_{40}^{12}$ & $4+\zeta_{40}^{1}+\overline{\zeta_{40}^{1}}+\zeta_{40}^{3}+\overline{\zeta_{40}^{3}}=4+\mu_{20,4}$ & numeric\tabularnewline
\cline{2-5} 
 & $24$ & $1+\zeta_{48}^{1}+\zeta_{48}^{11}+\zeta_{48}^{14}$ & $4+\zeta_{48}^{1}+\overline{\zeta_{48}^{1}}+\zeta_{48}^{3}+\overline{\zeta_{48}^{3}}=4+\mu_{24,4}$ & numeric\tabularnewline
\cline{2-5} 
 & \multicolumn{4}{c|}{$\ldots$}\tabularnewline
\hline 
\end{tabular}

}

\end{center}
\end{table}

\section{\label{sec:Examples_of_CASTs_with_DTO}Examples of CASTs with DTO
and Minimal Inflation Multiplier}

In this chapter we will introduce substitution rules of CASTs with
DTO for $n\in\{2,3,4,5,6,7\}$ and minimal inflation multiplier as
shown in Figures~\ref{fig:CAST_DTO_2}~-~\ref{fig:CAST_DTO_7}
and compare our results with those in \citep{FRETTLOH2017120} and
\citep{Say-awen:eo5083}.

Within this paper $n$ is used to describe the $2n$-th cyclotomic
fields $\mathbb{Q}\left(\zeta_{2n}\right)$ which support the vertices
of the tilings, while \citep{FRETTLOH2017120} and \citep{Say-awen:eo5083}
use it to describe tilings invariant under $n$-fold rotation. To
avoid confusion we use the notation $n^{*}$ for the latter purpose.
With $\mathbb{Q}\left(\zeta_{n}\right)\mathbb{=Q}\left(\zeta_{2n}\right)$
for $odd\;n$ it is obvious that $n^{*}=n$ for $odd\;n^{*}$ and
$n^{*}=2n$ for $even\;n^{*}$. This is especially important for comparing
results, such as symmetry conditions and inflation multipliers:

The tilings in \citep{FRETTLOH2017120} and \citep{Say-awen:eo5083}
yield individual cyclical symmetry $C_{^{n*}}$, while the tilings
and substitution rules in this article yield individual dihedral symmetry
$D_{2n}$.

The inflation multiplier of the tilings in \citep{FRETTLOH2017120}
and \citep{Say-awen:eo5083} can be rewritten as $\eta=\zeta_{2n}^{0}+\zeta_{2n}^{0}+\zeta_{2n}^{1}$.
By comparing the value with the results in Table~\ref{tab:min-infl-with-dto}
we note that $\left|\eta_{min.irr.}\right|<\left|\zeta_{2n}^{0}+\zeta_{2n}^{0}+\zeta_{2n}^{1}\right|$
for $n\geqq4$ and $\left|\eta_{min.irr.}\right|=\left|\zeta_{2n}^{0}+\zeta_{2n}^{0}+\zeta_{2n}^{1}\right|$
for $n\in\left\{ 2,3\right\} $.

Furthermore the tilings in \citep{FRETTLOH2017120} and \citep{Say-awen:eo5083}
were designed to yield finite local complexity (FLC). As noted in
\citep[Remark 2.6]{frettloeh_and_richard_2014}, \citep{FRETTLOH2017120}
and \citep[Lemma 7]{Say-awen:eo5083} a substitution tiling $\mathcal{T}$
with respect to a substitution with a finite prototile set consisting
of polygons satisfying the condition that the tiles in the level-$k$
supertiles meet ``full-edge to full-edge'' for all  $k\in\mathbb{\mathbb{N}}_{>0}$
for all prototiles $P_{x}$ implies that the tiling $\mathcal{T}$
has FLC.

The substitution rules of CASTs with $n\in\{2,3,4,5,6\}$ in Figures~\ref{fig:CAST_DTO_2}~-~\ref{fig:CAST_DTO_6}
satisfy this conditions perfectly. As the substitution rules in \citep{FRETTLOH2017120}
and \citep{Say-awen:eo5083} they meet the ``full-edge to full-edge''
condition and all inflated edges of the same length are dissected
in the same manner. In addition and contrast to \citep{FRETTLOH2017120}
and \citep{Say-awen:eo5083} the dissections of the inflated edges
herein are strictly mirror symmetric, so indication of orientations
can be omitted in general.

\begin{figure}
\begin{center}
\resizebox{0.75\textwidth}{!}{%

\includegraphics{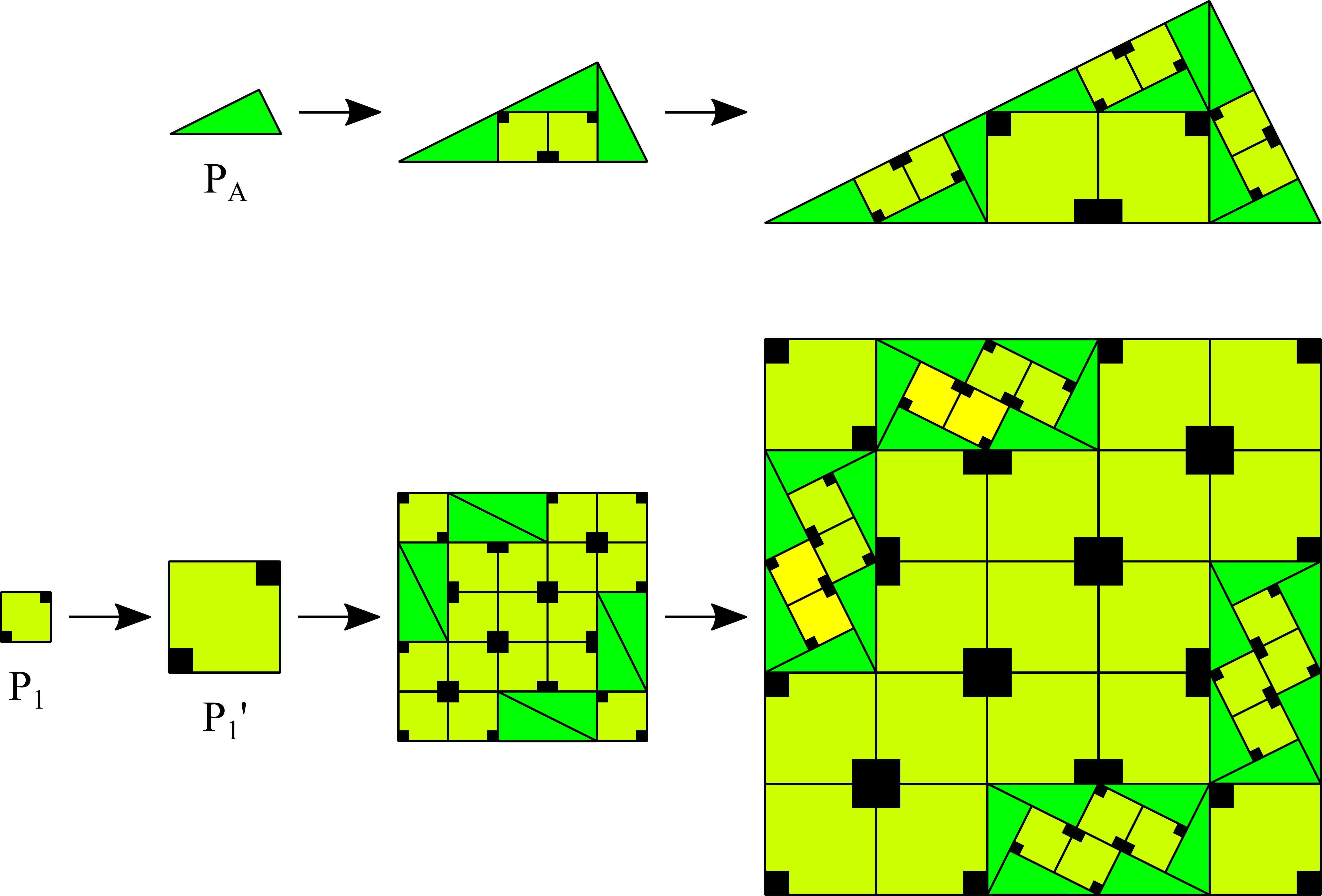}}
\end{center}\caption{\label{fig:CAST_DTO_2}Substitution rules of a CAST with DTO and FLC
for the case $n=2$ with minimal inflation multiplier and individual
dihedral symmetry $D_{4}$. Here the $P_{1}$ level-$3$ supertile
contains two congruent patches (highlighted in yellow) which are rotated
against each other by an irrational angle.}
\end{figure}

\begin{figure}
\begin{center}
\resizebox{0.85\textwidth}{!}{%

\includegraphics{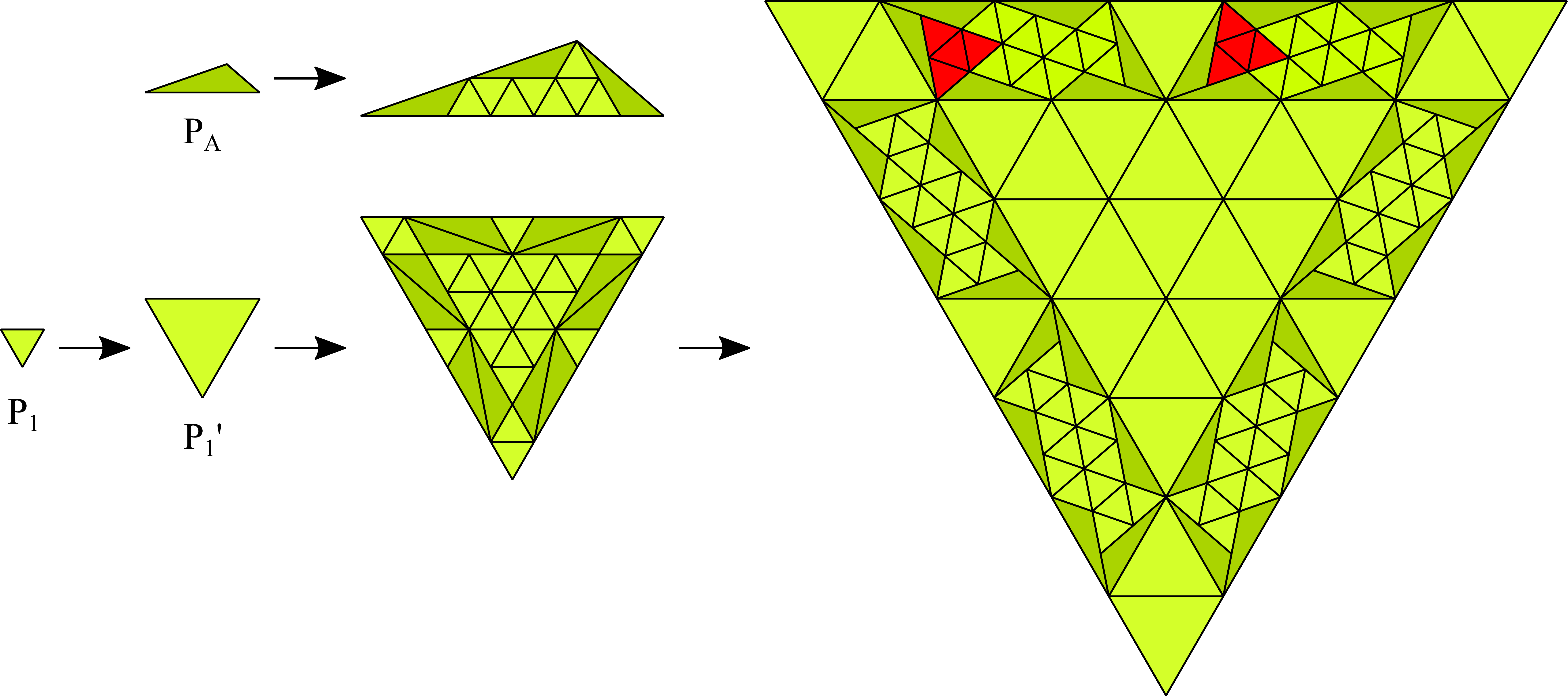}}
\end{center}\caption{\label{fig:CAST_DTO_3}Substitution rules of a CAST with DTO and FLC
for the case $n=3$ with minimal inflation multiplier and individual
dihedral symmetry $D_{6}$. Here the $P_{1}$ level-$3$ supertile
contains two congruent patches (highlighted in red) which are rotated
against each other by an irrational angle.}
\end{figure}

\begin{figure}
\begin{center}
\resizebox{1.0\textwidth}{!}{%

\includegraphics{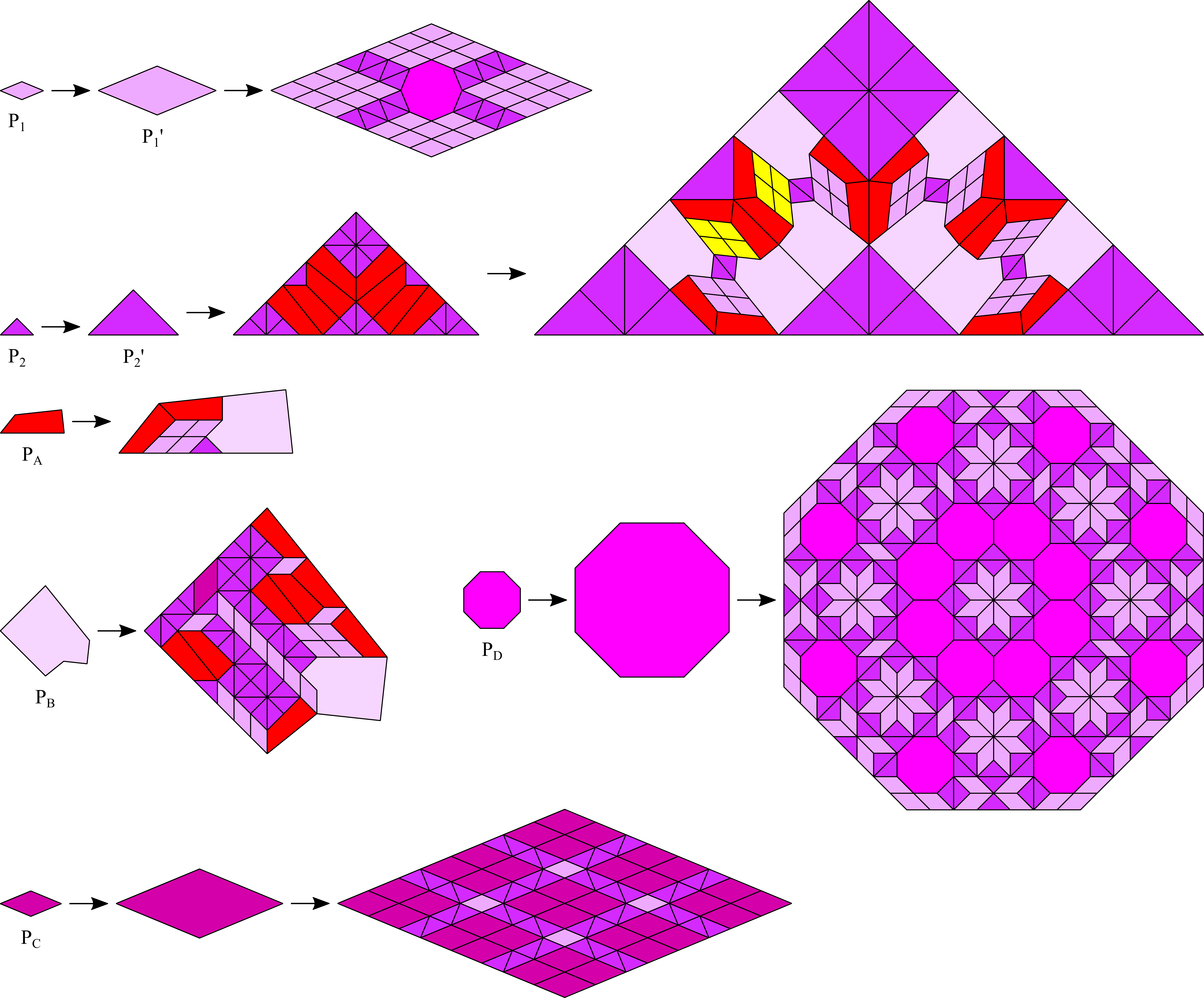}}
\end{center}\caption{\label{fig:CAST_DTO_4}Substitution rules of a CAST with DTO and FLC
for the case $n=4$ with minimal inflation multiplier and individual
dihedral symmetry $D_{8}$. Here the $P_{2}$ level-$3$ supertile
contains two congruent patches (highlighted in yellow) which are rotated
against each other by an irrational angle.}
\end{figure}

\begin{figure}
\begin{center}
\resizebox{0.9\textwidth}{!}{%

\includegraphics{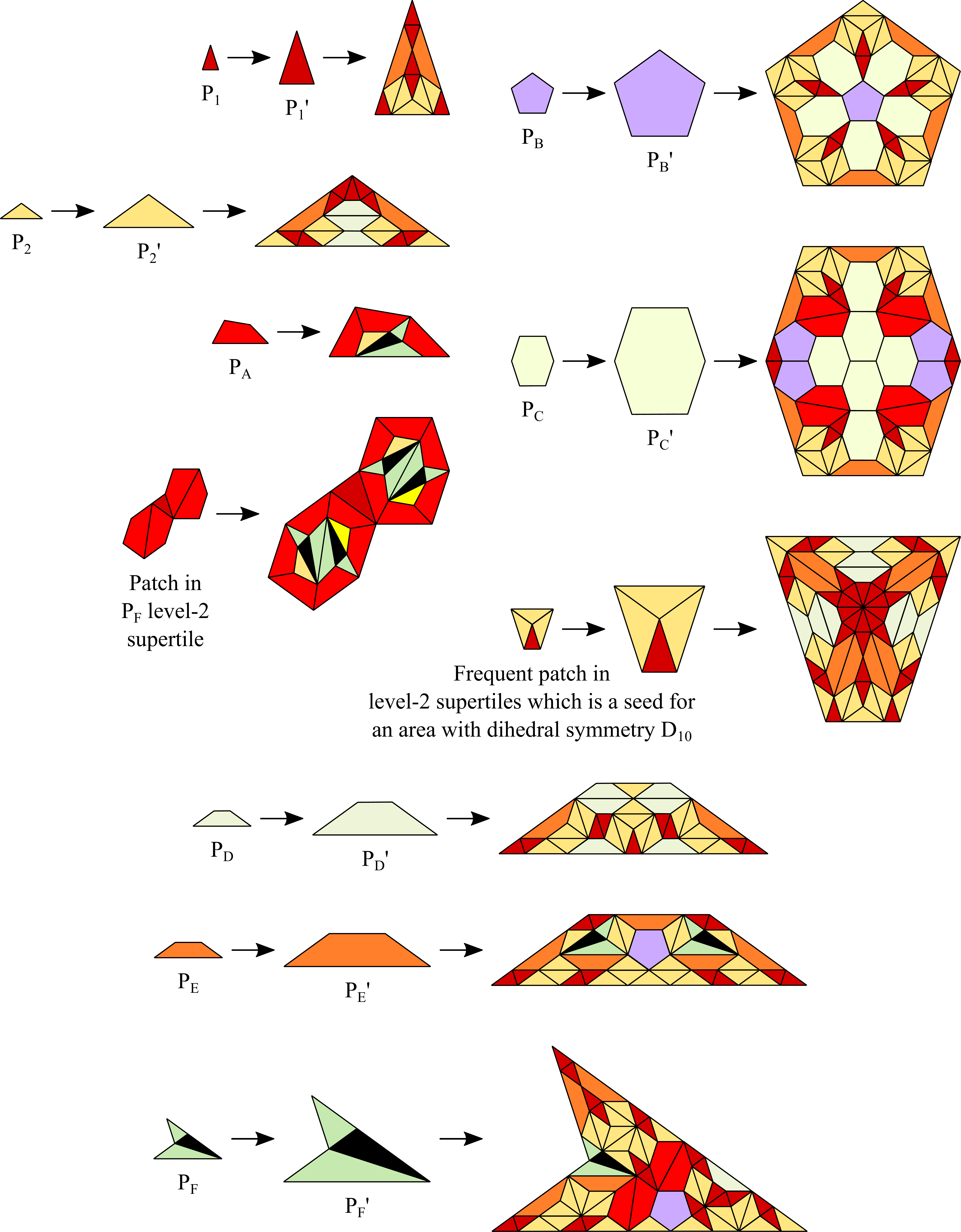}}
\end{center}\caption{\label{fig:CAST_DTO_5}Substitution rules of a CAST with DTO and FLC
for the case $n=5$ with minimal inflation multiplier and individual
dihedral symmetry $D_{10}$. Here the $P_{A}$ level-$3$ supertile
contains two congruent $P_{2}$ tiles (highlighted in yellow) which
are rotated against each other by an irrational angle. }
\end{figure}
\begin{figure}
\begin{center}
\resizebox{1.0\textwidth}{!}{%

\includegraphics{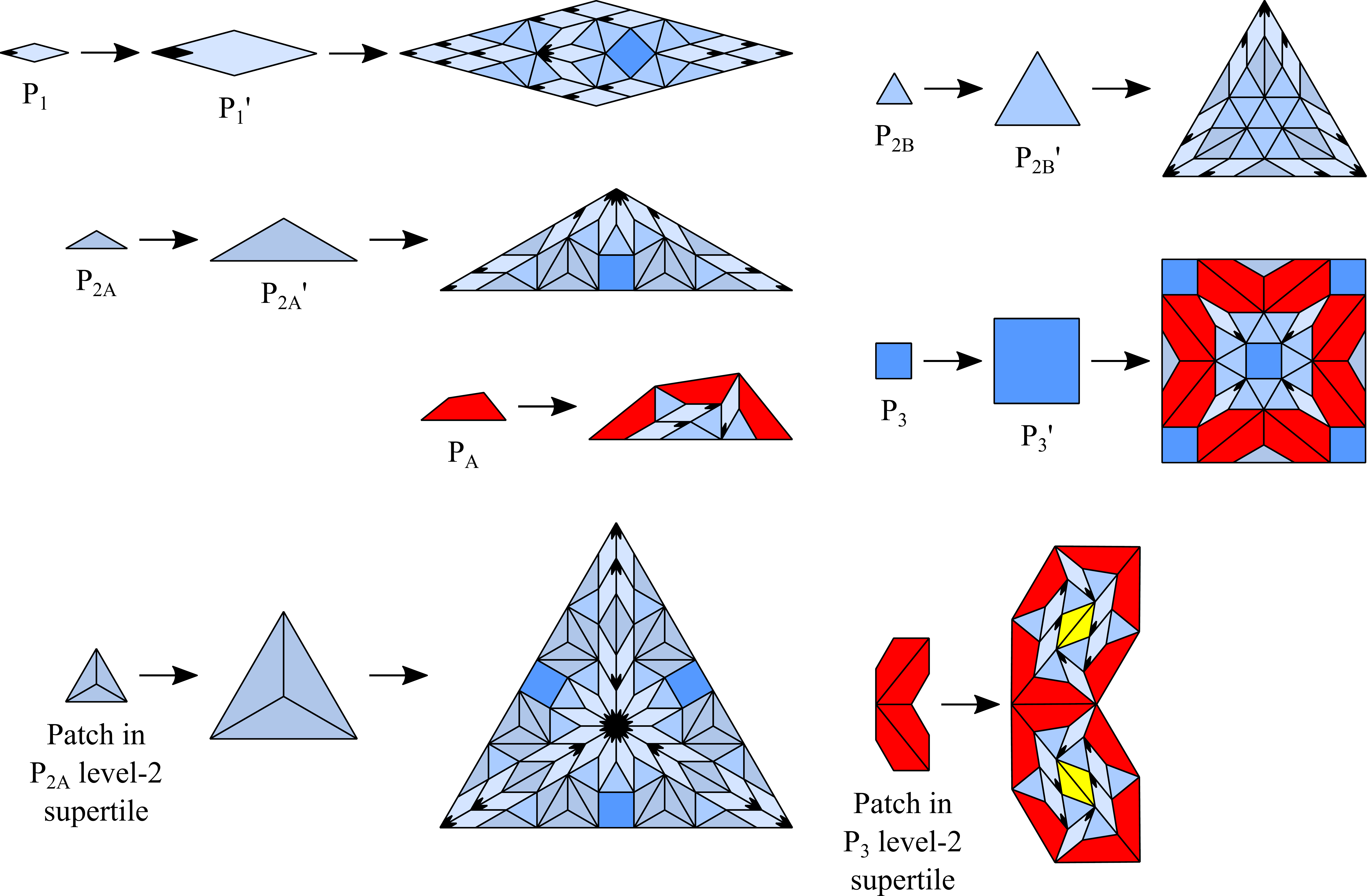}}
\end{center}\caption{\label{fig:CAST_DTO_6}Substitution rules of a CAST with DTO and FLC
for the case $n=6$ with minimal inflation multiplier and individual
dihedral symmetry $D_{12}$. Here the $P_{3}$ level-$3$ supertile
contains two congruent patches (highlighted in yellow) which are rotated
against each other by an irrational angle. }
\end{figure}
\begin{figure}
\begin{center}
\resizebox{1.0\textwidth}{!}{%

\includegraphics{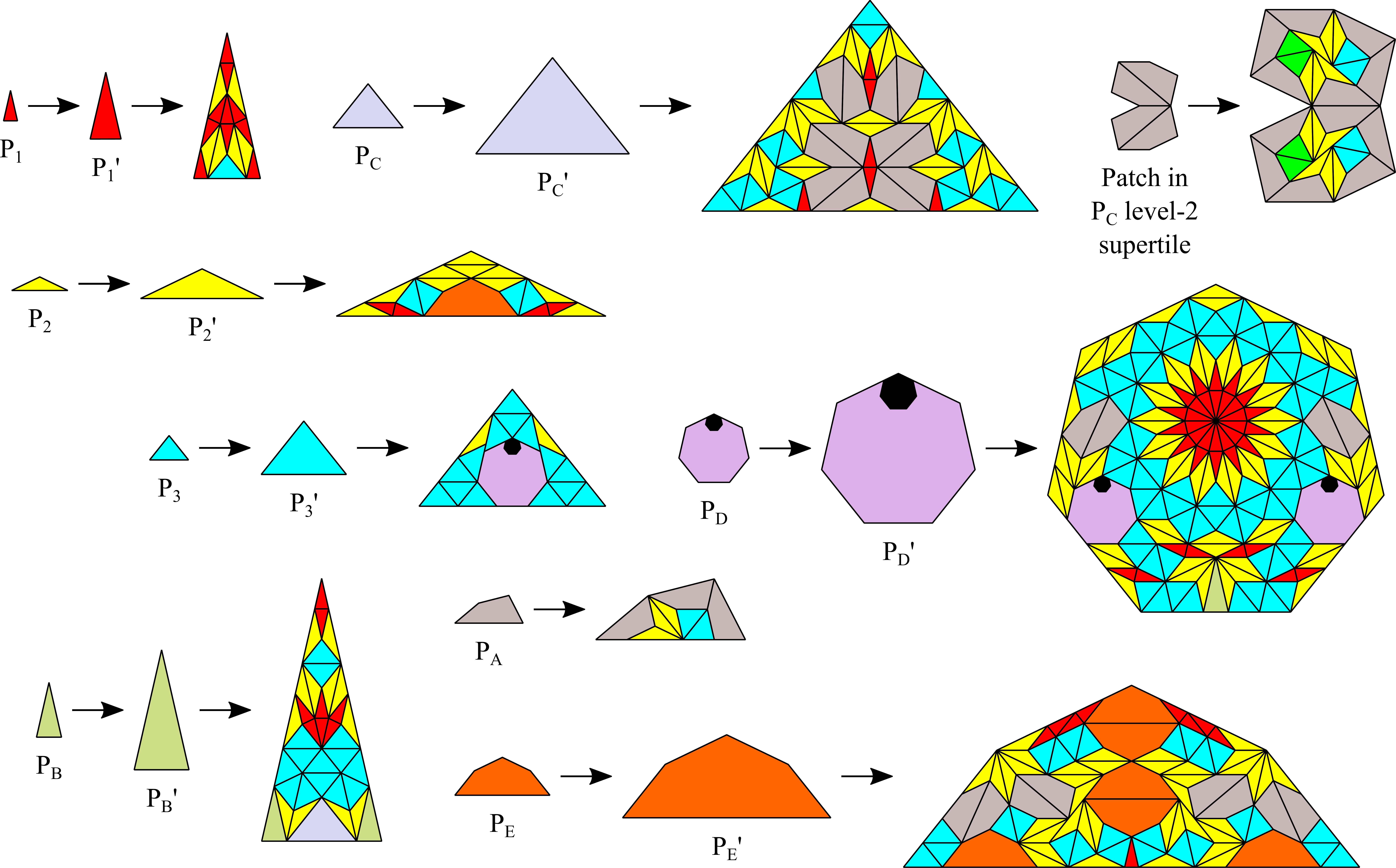}}
\end{center}\caption{\label{fig:CAST_DTO_7}Substitution rules of a CAST with DTO for the
case $n=7$ with minimal inflation multiplier and individual dihedral
symmetry $D_{14}$. Here the $P_{C}$ level-$3$ supertile contains
two congruent patches (highlighted in green) which are rotated against
each other by an irrational angle.}
\end{figure}
\begin{figure}
\begin{center}
\resizebox{0.9\textwidth}{!}{%

\includegraphics[angle=90]{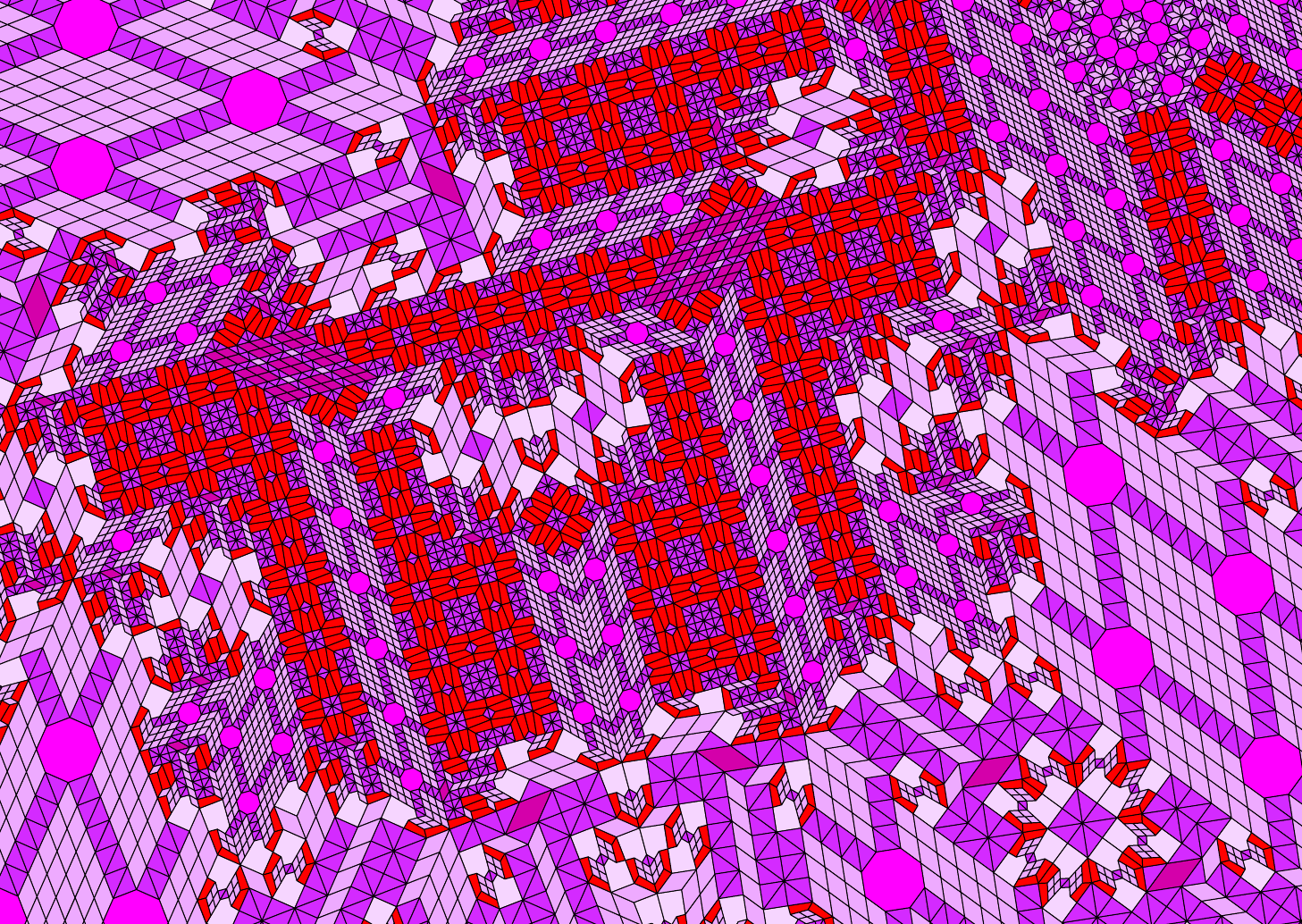}}
\end{center}

\caption{CAST with DTO for the case $n=4$ with minimal inflation multiplier
and individual dihedral symmetry $D_{8}$. It is generated with substitution
rules shown in Figure~\ref{fig:CAST_DTO_4}.}
\end{figure}

As in \citep{FRETTLOH2017120} and \citep{Say-awen:eo5083} additional
vertices (pseudo-vertices) are introduced to dissect all edges of
length $2$ or $3$ into segments of length $1$. These edges can
be found on the boundaries of the $P_{A}$-prototiles of the CASTs
with $n\in\{2,3,4\}$ and the $P_{B}$-prototile of the CAST with
$n=4$ as shown in Figures~\ref{fig:CAST_DTO_2}~-~\ref{fig:CAST_DTO_4}.
Another pseudo-vertex has to be introduced to divide one of the long
edges of the $P_{F}$-prototile of the CAST with $n=5$ into two segments
of length $1$ and $\zeta_{10}^{1}+\overline{\zeta_{10}^{1}}$ as
shown in Figure~\ref{fig:CAST_DTO_5}. Here an appropriate decoration
is applied.

While in \citep{FRETTLOH2017120} and \citep{Say-awen:eo5083} the
derivation of substitution rules starts with a regular $n^{*}$-gon
prototile, we choose the rather irregular convex $P_{A}$-prototile
for this purpose. Its boundary can be described as polygonal line
of unit segments - analogous to roots of unity added up to the minimal
inflation multiplier $\eta_{min.irr.}$ - and a straight ``baseline''
of length $\left|\eta_{min.irr.}\right|$ which connects start and
end point. 

The substitution rule for a $P_{A}$-prototile can easily be obtained
by lining up $P_{A}$-tiles by their baselines - again analogous to
roots of unity added up to the minimal inflation multiplier $\eta_{min.irr.}$
- and filling up the remaining gaps with rhombs, isosceles triangles
or other appropriate polygons. The cases $n\in\{4,5\}$ require the
introduction of asymmetric concave prototiles, in detail the $P_{B}$-prototile
in the shape of a irregular concave hexagon in Figure~\ref{fig:CAST_DTO_4}
and the $P_{F}$-tile in the shape of a concave tetragon (a kite)
in Figure~\ref{fig:CAST_DTO_5}. It may take two inflation steps
applied to the newly introduced prototiles before a dissection into
prototiles is possible and the substitution rule can be found. During
the process additional gaps in new shapes, or more accurately, new
prototiles may appear which also require additional substitution rules.
As a consequence it may take several recursions and a process of trial
and error until all prototiles and substitution rules have been found
and the tiling yields the desired properties such as individual dihedral
symmetry, primitivity, DTO and FLC.

On the strength of the author's experience, the introduction of individual
dihedral symmetry as well as FLC tend to increase the number of prototiles
and substitution rules.

To ensure primitivity it is helpful to place pairs of $P_{A}$-tiles
(with the baseline encased) into substitution rules of other prototiles.
DTO can be ensured by placing two such pairs of $P_{A}$-tiles with
different chiralities into one substitution rule.

It seems the limitation to $n\in\{2,3,4,5,6,7\}$ is a rather arbitrary
choice. For this reason we note:
\begin{conjecture}
\label{conj:CAST_DTO_min_ifl_irr_arg_for_all_n}CASTs with DTO, FLC,
minimal inflation multiplier as shown in Table~\ref{tab:min-infl-with-dto}
and individual dihedral symmetry $D_{2n}$ exist for all $n\geqq2$.
\end{conjecture}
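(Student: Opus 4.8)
The plan is to prove the conjecture constructively, by exhibiting for each $n\geqq2$ an explicit finite prototile set together with primitive substitution rules, generalising the seven worked examples of Figures~\ref{fig:CAST_DTO_2}--\ref{fig:CAST_DTO_7}. The backbone of the construction is the \emph{baseline prototile} $P_{A}$: writing the minimal inflation multiplier from Table~\ref{tab:min-infl-with-dto} as $\eta_{min.irr.}=\sum_{k=1}^{l}\zeta_{2n}^{\alpha_{k}}$, I would let the boundary of $P_{A}$ consist of the polygonal path of $l$ unit edges whose direction vectors are the summands $\zeta_{2n}^{\alpha_{k}}$, closed by a straight baseline of length $\left|\eta_{min.irr.}\right|$ joining its endpoints. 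Because $\arg\left(\eta_{min.irr.}\right)\notin\pi\mathbb{Q}$ by Proposition~\ref{prop:Criterion_Eta_is_rational_or_not}, the baseline direction is incommensurable with the edge directions, and this incommensurability is what will ultimately force dense orientations.

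First I would build the substitution rule for $P_{A}$: inflate $P_{A}$ by $\eta_{min.irr.}$ and tile the inflated copy by lining up $l$ unit-scale $P_{A}$-tiles along the inflated baseline --- again in the pattern dictated by the summands of $\eta_{min.irr.}$ --- and then fill the residual gaps with auxiliary tiles (rhombs, isosceles triangles, kites). Each gap shape not already in the prototile set is adopted as a new prototile, and the procedure is repeated on it. The first essential step is therefore a \emph{termination lemma}: one must show that for every fixed $n$ this gap-filling closes after finitely many recursions, producing a finite prototile set whose substitution rules are mutually consistent and collectively primitive. I expect this to follow from the fact that all edges live in $\mathbb{Z}\left[\zeta_{2n}\right]$ and all gap polygons have bounded diameter after one inflation, so that only finitely many congruence classes of gaps can arise; making this rigorous \emph{uniformly} in $n$ is the core combinatorial work.

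Next I would install the three qualitative properties. For \textbf{DTO}, I would place into one supertile two pairs of baseline-joined $P_{A}$-tiles of opposite chirality; since their baselines realise the irrational direction $\arg\left(\eta_{min.irr.}\right)$, the two resulting congruent copies are rotated against one another by an angle $\varTheta\notin\pi\mathbb{Q}$, and primitivity then propagates this relative rotation to a set of orientations dense on the circle, as the definition demands. For \textbf{dihedral symmetry} $D_{2n}$, I would symmetrise the seed, arranging $2n$ reflected and rotated copies of a $P_{A}$-supertile around a common centre so that each level-$k$ supertile of the symmetrised seed carries the full $D_{2n}$ action, yielding arbitrarily large $D_{2n}$-symmetric patches. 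For \textbf{FLC}, I would enforce the full-edge-to-full-edge matching of \citep[Remark 2.6]{frettloeh_and_richard_2014}, \citep{FRETTLOH2017120} and \citep[Lemma 7]{Say-awen:eo5083} by subdividing every edge of length exceeding $1$ into unit pseudo-edges and decorating them so that equal-length inflated edges are always dissected identically; the cited criterion then delivers FLC.

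The hard part will be reconciling FLC with the other requirements for large $n$. Already the worked examples secure FLC only up to $n=6$, whereas the $n=7$ construction is offered with DTO but without an FLC guarantee, which signals that full-edge-to-full-edge matching becomes progressively harder to maintain as the number of admissible edge directions grows and the gap polygons proliferate. A convincing proof would require either a uniform decoration scheme keeping the edge-matching condition satisfiable for every $n$, or a separate argument showing FLC can always be recovered by a finite refinement of the prototile set. Until such a uniform scheme is found the statement must remain a conjecture; the author's own observation that imposing dihedral symmetry and FLC simultaneously tends to inflate the prototile count is precisely the phenomenon obstructing a clean induction on $n$.
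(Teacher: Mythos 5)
You were asked to prove a statement that the paper itself leaves open: Conjecture~\ref{conj:CAST_DTO_min_ifl_irr_arg_for_all_n} is explicitly flagged by the author with ``a rigorous proof is not available yet,'' and the paper supplies only worked examples for $n\in\{2,3,4,5,6,7\}$ (with FLC only for $n\leqq6$) together with an informal recipe. Your proposal reconstructs essentially that same recipe --- the baseline prototile $P_{A}$ whose boundary consists of unit edges in the directions of the summands of $\eta_{min.irr.}$ closed by a baseline of length $\left|\eta_{min.irr.}\right|$, the substitution rule obtained by lining up $P_{A}$-tiles along the inflated baseline and filling gaps with rhombs and triangles, DTO via two baseline-joined pairs of $P_{A}$-tiles of opposite chirality, and FLC via full-edge-to-full-edge matching --- and then, correctly, concedes that it cannot be closed. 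Your final paragraph, concluding that the statement ``must remain a conjecture,'' is exactly the status in the paper; so what you have written is a construction plan, not a proof, and no proof exists to compare it against.

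The gaps you name are the real ones, but be precise about why they are gaps. First, your ``termination lemma'' does not follow from the reasons you give: vertices in $\mathbb{Z}\left[\zeta_{2n}\right]$ plus bounded diameter does not bound the number of congruence classes of gap polygons \emph{uniformly in $n$}, because the number of admissible edge directions grows with $n$ and each new gap shape spawns a new prototile whose own substitution rule can create further shapes; the paper describes this as a process of ``trial and error'' with no termination guarantee, and that is precisely where a proof would have to do genuinely new work. Second, for FLC your appeal to the full-edge-to-full-edge criterion is sound as a sufficient condition, but the paper's own $n=7$ example fails to achieve it, so a uniform decoration scheme would have to overcome an obstruction the author could not; asserting its existence is assuming the conjecture. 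Third, a dependency you did not mention: for $even\;n\geqq16$ with $n\;\mathrm{mod}\;4=0$ the entry of Table~\ref{tab:min-infl-with-dto} that your construction takes as input is itself only conjecturally minimal (Conjecture~\ref{conj:no_eta_for_4plusmu2_or_5plusmu2}, verified numerically for $n\leqq100$), so any proof of Conjecture~\ref{conj:CAST_DTO_min_ifl_irr_arg_for_all_n} in the stated form inherits that conditional status. Your DTO step is the most nearly complete part: it matches the paper's mechanism, and once primitivity is granted, one irrational relative rotation between congruent patches does propagate to dense orientations; but primitivity itself is only guaranteed by the unproven termination of the gap-filling recursion.
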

The status of Conjecture~\ref{conj:CAST_DTO_min_ifl_irr_arg_for_all_n}
is subject to further research. The author believes it is true, however
a rigorous proof is not available yet.

\section{Summary}

The definition of Cyclotomic Aperiodic Substitution Tilings (CASTs)
in \citep[Chapter 2]{sym9020019} was revised in a way that it covers
CASTs with dense tile orientations (DTO) and so the results in \citep[Chapter 2]{sym9020019}
apply. It was shown that every CAST with DTO has an inflation multiplier
$\eta$ with irrational argument so that $\nicefrac{k\pi}{2n}\neq\arg\left(\eta\right)\notin\mathbb{\pi Q}$.
We could derive the minimal inflation multiplier for such tilings
for $odd\;n\geqq3$ and $even\;n\geqq2$ with $n\;\mathrm{mod\;}4=2$.
For $even\;n\geqq2$ with $n\mathrm{\;mod\;}4=0$ our results depend
on the status of Conjecture~\ref{conj:no_eta_for_4plusmu2_or_5plusmu2}.
However, for $n\leqq100$ it was confirmed numerically.

For $n\in\left\{ 2,3,4,5,6,7\right\} $ we found examples for CASTs
with DTO, minimal inflation multiplier and individual dihedral symmetry
$D_{2n}$. The examples for $n\in\left\{ 2,3,4,5,6\right\} $ also
yield finite local complexity (FLC). It is very likely that CASTs
with DTO, FLC, minimal inflation multiplier and individual dihedral
symmetry $D_{2n}$ for exist for all $n\geqq2$. 

As a final remark the author would like to emphasise the great aesthetic
qualities of CASTs with DTO - such as their ability to create a subtle
impression of imperfection - which may motivate further research beyond
their mathematical and geometrical properties.

\section*{Acknowledgment}

The author would like to thank Michael Baake (Bielefeld University),
Dirk Frettlöh (Bielefeld University), Uwe Grimm (The Open University,
Milton Keynes), Reinhard Lück and Christian Georg Mayr (Technische
Universität Dresden) for their continued support and encouragement.

\bibliographystyle{halpha}
\bibliography{girih7-reference___}

\end{document}